\def\section{\@startsection{section}{1}%
  \z@{.7\linespacing\@plus\linespacing}{.5\linespacing}%
  {\normalfont\bfseries\centering}}
\def\@secnumfont{\bfseries}
\renewcommand{\o}{\circ}
\def\frak{\mathfrak}
\def\Bbb{\mathbb}
\def\Cal{\mathcal}
\def\sideremark#1{\ifvmode\leavevmode\fi\vadjust{\vbox to0pt{\vss%
  \hbox to 0pt{\hskip\hsize\hskip1em%
  \vbox{\hsize3cm\tiny\raggedright\pretolerance10000%
  \noindent #1\hfill}\hss}\vbox to8pt{\vfil}\vss}}}%
\newcommand{\al}{\alpha}
\newcommand{\be}{\beta}
\newcommand{\ga}{\gamma}
\newcommand{\ka}{\kappa}
\newcommand{\la}{\lambda}
\newcommand{\om}{\omega}
\newcommand{\ph}{\varphi}
\newcommand{\ze}{\zeta}
\newcommand{\La}{\Lambda}
\newcommand{\Ps}{\Psi}
\newcommand{\Om}{\Omega}
\newcommand{\ups}{\upsilon}
\newcommand{\Ad}{\operatorname{Ad}}
\renewcommand{\exp}{\operatorname{exp}}
\newcommand{\id}{\operatorname{id}}
\newcommand{\tr}{\operatorname{tr}}
\newcommand{\End}{\operatorname{End}}
\renewcommand{\o}{\circ}
\newcommand{\pmat}[1]{\begin{pmatrix}#1\end{pmatrix}}
\renewcommand{\P}{\operatorname{\Cal P}}
\let\del=\partial
\let\x=\times
\def\g{\frak g}
\def\p{\frak p}
\def\q{\frak q}
\def\X{\frak X}
\def\({\big(}
\def\){\big)}
\def\R{\Bbb R}
\def\H{\Bbb H}
\def\I{\Bbb I}
\def\G{{\Cal G}}
\def\L{{\Cal L}}
\def\tG{\tilde G}
\def\tP{\tilde P}
\def\tg{\tilde\g}
\def\tp{\tilde\p}
\def\tom{\tilde\om}
\def\ddt#1{\tfrac{d}{dt}\ifx#1\nic\else\big\vert_{#1}\fi}
\def\.{\hbox to5pt{\hss$\cdot$\hss}}
\def\span#1{\langle#1\rangle}
\newtheorem*{prop*}{Proposition}
\newtheorem{thm}[subsection]{Theorem}
\newtheorem*{thm*}{Theorem}
\newtheorem*{lem*}{Lemma}
\newtheorem*{cor*}{Corollary}
\newtheorem*{def*}{Definition}
\begin{document}

\title{Lie contact structures and chains\\} 
\author{Vojt\v ech \v Z\'adn\'ik}
\begin{abstract}
  Lie contact structures generalize the classical Lie sphere geometry of
  oriented hyperspheres in the standard sphere.
  They can be equivalently described as parabolic geometries corresponding
  to the contact grading of orthogonal real Lie algebra.
  It follows the underlying geometric structure can be interpreted in
  several equivalent ways.
  In particular, we show this is given by a split-quaternionic structure on
  the contact distribution, which is compatible with the Levi bracket.

  In this vein, we study the geometry of chains,  a distinguished family of 
  curves appearing in any parabolic contact geometry.
  Also to the system of chains there is associated a canonical parabolic
  geometry of specific type.
  Up to some exceptions in low dimensions, 
  it turns out this can be obtained by an extension of the parabolic geometry
  associated to the Lie contact structure if and only if the latter is locally
  flat.
  In that case we can show that chains are never geodesics of an affine
  connection, hence, in particular, the path geometry of chains is always
  non-trivial.
  Using appropriately this fact, we conclude that the path geometry of chains 
  allows to recover the Lie contact structure, hence, in particular, 
  transformations preserving chains must preserve the Lie contact structure.
\end{abstract}
\address{Masaryk University, Brno, Czech Republic}
\email{zadnik@math.muni.cz}

\subjclass[2000]{53C15, 53C05, 53D10}
\keywords{Lie contact structures, parabolic geometries, chains}
%\date{\today}, ver. 3}
\maketitle

%\setcounter{tocdepth}{1}
%\tableofcontents

%%%
\section{Introduction}			\label{1}
The Lie sphere geometry is the geometry of oriented hyperspheres
in the standard sphere established  by S.~Lie.
A generalization of the corresponding geometric structure to general smooth 
manifold is provided by \cite{SY} and further studied by other authors.
Here we briefly present the basic ideas and outline the purposes of this
paper.

\subsection{Classics}
Let us consider the vector space $\R^{n+4}$ with an inner product of signature
$(n+2,2)$.
The projectivization of the cone of non-zero null-vectors in $\R^{n+4}$ is  a
hyperquadric in the projective space $\R\Bbb P^{n+3}$, 
which is called the \textit{Lie quadric} and denoted by $Q^{n+2}$.
The standard sphere $S^{n+1}$ is then realized as the intersection of $Q^{n+2}$ 
with a hyperplane in $\R\Bbb P^{n+3}$.
There is a bijective correspondence between the Lie quadric $Q^{n+2}$ 
and the set of the so called kugels of $S^{n+1}$.
The kugel of $S^{n+1}$ is an oriented hypersphere or a point 
(called a point sphere) in $S^{n+1}$.
The point is that kugels corresponding to the same projective line are in
oriented contact, with the point sphere as the common contact point.
Hence each projective line in $Q^{n+2}$ is uniquely represented by the common 
contact point and the common unit normal of the family of kugels in contact.
This establishes a bijective correspondence between the set of projective
lines in $Q^{n+2}$ (i.e. isotropic planes in $\R^{n+4}$) and the unit tangent 
sphere bundle of $S^{n+1}$, denoted by $T_1(S^{n+1})$.
Either of the two is then understood as the model Lie contact structure in
dimension $2n+1$.

By definition, the Lie transformation group $G$ is the group of
projective transformations of $\R\Bbb P^{n+3}$ preserving the Lie quadric.
Hence $G$ is isomorphic to $PO(n+2,2)$, the quotient of $O(n+2,2)$ by its 
center which is $\{\pm\id\}$.
The group $G$ acts transitively (and effectively) on the set of projective
lines in $Q^{n+2}$, and hence on $T_1(S^{n+1})$, and preserves the canonical
contact structure.
As a homogeneous space, $T_1(S^{n+1})\cong G/P$ where $P\subset G$ is the 
stabilizer of some element.
It turns out $P$ is a parabolic subgroup of $G$.

The generalization of the concepts above to general contact manifold
yields the notion of the Lie contact structure, which is defined in section 
3 in \cite{SY} as a reduction of the adapted frame bundle to an appropriate
subgroup of the structure group. 
Applying the Tanaka's theory, the equivalence problem for Lie contact
manifolds is solved in principle by Theorem 4.3 in \cite{SY} and rather 
explicitly in \cite{Miy}.
Anyway, there is established an equivalence between Lie contact structures and
normal Cartan geometries of type $(G,P)$.
Important examples of Lie contact structures are observed on the unit tangent 
sphere bundles of Riemannian manifolds.
%It turns out the Lie contact structure on $T_1M$ does not depend on the conformal 
%change of the Riemannian metric on $M$; this correspondence is further
%discussed in \cite{Miy2,SY2} and more recently in \cite{Wis}.

\subsection{General signature}
The ideas above allow a natural generalization considering the inner 
product on $\R^{n+4}$ to have an arbitrary signature $(p+2,q+2)$, where $p+q=n$.
Following the previous approach, we still consider the space of projective lines
in the Lie quadric $Q^{n+2}$, i.e.\ the space of isotropic planes in
$\R^{n+4}$, as the model.
As a homogeneous space, this is isomorphic to $G/P$, where $G=PO(p+2,q+2)$ and
$P\subset G$ is the stabilizer of an isotropic plane in $\R^{n+4}$.
It is then natural to define the \textit{Lie contact structure of signature $(p,q)$}
as the underlying structure of a parabolic geometry of type $(G,P)$ which is
the meaning of the definition in \ref{2.2}, which we adopt from section 4.2.5 in
\cite{CS}.
Note that in low dimensions the Lie contact structure may have a specific
flavour, which is briefly discussed in remark \ref{rem1}.

First of all, 
the Lie contact structure on $M$ involves a contact structure $H\subset TM$
so that $H\cong L^*\otimes R$, where $L$ and $R$ are auxiliary vector bundles 
over $M$ of rank 2 and $n$.
Note that the auxiliary bundles has almost no intrinsic geometrical meaning,
however, for $H\cong L^*\otimes R$, there is a distinguished subset in each
$H_x$ consisting of all the elements of rank one.
This is the \textit{Segre cone} which plays a role in the sequel.
The maximal linear subspaces contained in the cone have dimension $n$ and it turns out 
they are isotropic with respect to the Levi bracket.
Characterization of the Lie contact structure in these terms is provided by
Proposition \ref{prop1}.

On the other hand, the tensor product structure of $H_x$ can be naturally 
rephrased as a  \textit{split-quaternionic structure}. 
The compatibility with the Levi bracket is easy to express and
it turns out  these data also characterize the Lie contact structure entirely, 
Proposition \ref{prop2}.
This is the convenient interpretation of the Lie contact structure we further
use below.
(Note that in both cases there is minor additional input involved, namely a
fixed trivialization of a line bundle over $M$.)

\subsection{Chains}
As for any parabolic contact structure, there is a general concept of
\textit{chains} which form a distinguished family of  curves generalizing 
the Chern--Moser chains on CR manifolds of hypersurface type.
As unparametrized curves (paths), chains are uniquely determined by a tangent
direction, transverse to the contact distribution, in one point.
A path geometry of chains can be equivalently described as a regular normal
parabolic geometry of a specific type over the open subset of $\P TM$
consisting of all non-contact directions in $TM$.
It turns out it is possible to relate the parabolic contact geometry and the
path geometry of chains on the level of Cartan geometries directly, i.e.\
without the prolongation.
This was done for Lagrangean contact structures  and CR structures of
hypersurface type in \cite{CZ}. 

In the rest of present paper we follow the mentioned construction and the consequences 
for Lie contact structures.
First, dealing with the homogeneous model, one explicitly describes the data allowing 
the direct relation between the Lie contact geometry and the path geometry of 
chains, section \ref{3.4}.
Second, the construction in general is compatible with the normality condition
if and only if  the parabolic contact structure is torsion free.
For Lie contact structures in dimension grater than or equal
to 7, the normality and torsion freeness imply the structure is locally flat,
see Theorem \ref{th2}.

Comparing to earlier studies in \cite{CZ,CZ2}, this brings a rather strong restriction.
Under this assumption the tools we use may seem a bit non-proportional, however, 
to our knowledge there is no elementary argument covering the results below,  
not even in the homogeneous model.
Analyzing the curvature of the induced Cartan geometry, we conclude with some 
applications for locally flat Lie contact structures:
Chains are never geodesics of an affine connection, hence, in particular, the
path geometry of chains is never flat, Theorem \ref{th3}.
Therefore the harmonic curvature of the path geometry of chains (and an
efficient interpretation of the structure) allows one to recover the 
Lie contact structure. 
Hence, contact diffeomorphisms preserving chains must preserve the Lie 
contact structure, Theorem \ref{th4}.

\subsection*{Acknowledgements}
Author thanks to Stuart Armstrong, Jan Slov\'ak, and Andreas \v Cap in
particular for  number of valuable discussions.
Support by the grant 201/06/P379 of the Grant Agency of Czech Republic is acknowledged too.

%%%
\section{Lie contact structures}	\label{2}
In this section we bring the general definition of Lie contact structures with
the alternative interpretations of the underlying geometric structure.
We start with a necessary background.

\subsection{Parabolic contact structures}		\label{2.0}
For a semisimple Lie group $G$ and a parabolic subgroup
$P\subset G$, \textit{parabolic geometry} of type $(G,P)$ on a smooth manifold 
$M$ consists of a principal $P$-bundle $\G\to M$ and a Cartan connection 
$\om\in\Om^1(\G,\g)$, where $\g$ is the Lie algebra of $G$.
The Lie algebra $\g$ is always equipped with the grading 
of the form $\g=\g_{-k}\oplus \dots \oplus \g_{0} \oplus \dots \oplus\g_{k}$ 
such that the Lie algebra $\p$ of $P$ is $\p = \g_0 \oplus \dots \oplus \g_k$. 
By $G_0$ we denote the subgroup in $P$, with the Lie algebra $\g_0$,
consisting of  all elements in $P$ whose adjoint action preserves the grading of $\g$. 
The grading of $\g$ induces a $P$-invariant filtration of $\g$, which
gives rise to a filtration of the tangent bundle $TM$, due to the usual identification 
$TM\cong\G\x_P\g/\p$ via the Cartan connection $\om$.
On the graded vector bundle corresponding to this filtration, there is an
algebraic bracket induced by the Lie bracket of vector fields, which is called
the \textit{Levi bracket}.
The parabolic geometry is \textit{regular} if the Levi bracket corresponds to
the bracket in $\g$ under the identification above.

\textit{Parabolic contact geometry} is a parabolic geometry whose underlying 
geometric structure consists of a contact distribution $H\subset TM$ and some 
additional structure on $H$. 
These correspond to contact gradings of simple Lie algebras as follows.
The \textit{contact grading} of a simple Lie algebra is a grading 
$\g=\g_{-2}\oplus\g_{-1}\oplus\g_0\oplus\g_1\oplus\g_2$ such
that $\g_{-2}$ is one dimensional and the Lie bracket $[\ ,\ ]:\g_{-1}\x\g_{-1}
\to\g_{-2}$ is non-degenerate.
Let us consider regular parabolic geometry of type $(G,P)$ such that the Lie 
algebra of $G$ admits a contact grading.
Then the corresponding filtration of the tangent bundle of $M$ is just a 
distribution $H\subset TM$, which turns out to be contact, the Levi bracket 
$\L:H\x H\to TM/H$ is non-degenerate, and the reduction of 
$\operatorname{gr}(TM):=(TM/H)\oplus H$ to the structure group $G_0$ gives
rise to the additional structure on $H$.

For general parabolic geometry $(\G\to M,\om)$, the curvature is often described by
the so called \textit{curvature function} $\ka:\G\to\La^2(\g/\p)^*\otimes\g$,
which is given by 
\begin{equation*}
  \ka(u)(X+\p,Y+\p)=d\om(\om^{-1}(X)(u),\om^{-1}(Y)(u))+[X,Y].
\end{equation*}
The Killing form on $\g$ provides an identification $(\g/\p)^*$ with $\p_+$,
hence the curvature function is viewed as having values in
$\La^2\p_+\otimes\g$.
The grading of $\g$ induces grading also to this space, which brings the notion of
homogeneity.
In particular, parabolic geometry is regular if and only if  the curvature function 
has values in the part of positive homogeneity.
Parabolic geometry is called \textit{torsion free} if $\ka$ has values in 
$\La^2\p_+\otimes \p$; note that torsion free parabolic geometry is
automatically regular.
Next, parabolic geometry is called \textit{normal} if $\partial^* \circ \ka =0$, where
$\partial^*:\La^2\p_+\otimes\g\to\p_+\otimes\g$ is the differential in
the standard complex computing the homology $H_*(\p_+,\g)$ of $\p_+$ with 
coefficients in $\g$.
For a regular normal parabolic geometry, the \textit{harmonic curvature} 
$\ka_H$  is the composition of $\ka$ with the natural projection 
$\ker(\partial^*)\to H_2(\p_+,\g)$.
By definition, $\ka_H$ is a section of $\G\x_P H_2(\p_+,\g)$ and the point is
it can be interpreted in terms of the underlying structure.
For all details on parabolic geometries we primarily refer to \cite{CS}.

\subsection{Contact grading of $\frak{so}(p+2,q+2)$}		\label{2.1}
Consider the inner product on $\R^{n+4}$ given by the matrix 
$$
\pmat{0&0&-\I_2\\ 0&\I_{p,q}&0\\ -\I_2&0&0},
$$
where $\I_{p,q}=\pmat{\I_p&0\\0&-\I_q}$ and $\I_r$ is the unit matrix of rank
$r$.
According to this choice, the Lie algebra $\g=\frak{so}(p+2,q+2)$ has got the
following form  with blocks of sizes 2, $n$, and 2:
\begin{equation*}
  \pmat{A&U&w\Bbb J\\ X&D&\I_{p,q}U^t\\ z\Bbb J&X^t\I_{p,q}&-A^t},
\end{equation*}
with $\Bbb J:=\pmat{0&1\\-1&0}$, where $z,w\in\R$, $D\in\frak{so}(p,q)$, and
$X,A, U$ are real matrices of size $n\x 2, 2\x 2, 2\x n$, respectively.
The contact grading of $\g$ is read along the diagonals so that $z$
parametrizes $\g_{-2}$, $X$ corresponds to $\g_{-1}$, the pair $(A,D)$ to 
$\g_0$, $U$ to $\g_1$, and $w$ to $\g_2$.
In particular, $X\in\g_{-1}$ is understood as an element of 
$\R^{2*}\otimes\R^n$, the space of linear maps from $\R^2$ to $\R^n$.

If we write $X\in\g_{-1}$ as the matrix $(X_1,X_2)$ with columns
$X_1,X_2\in\R^n$, and similarly for $Y$, then  the Lie bracket 
$[\ ,\ ]:\g_{-1}\x\g_{-1}\to\g_{-2}$ is explicitly given by
\begin{equation} 		\label{eq1}
  [X,Y]=(\span{X_1,Y_2}-\span{X_2,Y_1})\.e,
\end{equation}
where $\span{\ ,\ }$ denotes the inner product on $\R^n$ corresponding to
$\I_{p,q}$ and $e$ is the generator of $\g_{-2}$ corresponding to $z=1$ in the
description above.
Indeed, this bracket is non-degenerate.
Since $\g_{-1}$ is identified with the space of linear maps from $\R^2$ to $\R^n$, 
an endomorphism of $\R^m$ acts also on $\g_{-1}$ by the composition. 
In matrices, this is given by the left multiplication $X\mapsto CX$.
From \eqref{eq1} it is obvious the bracket $\g_{-1}\x\g_{-1}\to\g_{-2}$ is 
invariant under the action of endomorphisms on $\R^n$ preserving the inner product,
i.e.\ 
\begin{equation} 		\label{eq2}
  [CX,CY]=[X,Y]
\end{equation}
for any $C\in O(p,q)$ and any $X,Y\in\g_{-1}$.
Similarly, an endomorphism $A$ of $\R^2$ acts on $\g_{-1}$ from the right by
$X\mapsto XA$. 
An easy direct calculation shows that this is compatible with the bracket so
that 
\begin{equation} 		\label{eq3}
  [XA,YA]=\det A\.[X,Y]
\end{equation}
for any $A\in\frak{gl}(2,\R)$ and any $X,Y\in\g_{-1}$.

Following the introductory section, let the group to the Lie algebra 
$\g=\frak{so}(p+2,q+2)$ be $G:=PO(p+2,q+2)$.
The parabolic subgroup $P\subset G$ with the Lie algebra $\p=\g_0\oplus\g_1\oplus\g_2$
is represented by block upper triangular matrices in $G=PO(p+2,q+2)$ 
with blocks of sizes 2, $n$, and 2.
Obviously, $P$ stabilizes the plane spanned by the first two vectors of the
standard basis in $\R^{n+4}$ (which is indeed isotropic).
The subgroup $G_0\subset P$ is represented by block diagonal matrices in $P$ with
blocks of the same size.
Explicitly, $G_0$ is formed by the classes of matrices of the form 
\begin{equation*}
  \pmat{B&0&0\\ 0&C&0\\ 0&0&(B^{-1})^t},
\end{equation*}
where $B\in GL(2,\R)$ and $C\in O(p,q)$.
Each class has just two elements which differ by the sign, hence
$G_0\cong (GL(2,\R)\x O(p,q))/\{\pm\id\}$.
Let us represent an element of $G_0$ by the pair $(B,C)$ and an element of
$\g_-$ by $(z,X)$ as above.
Then a direct computation shows the adjoint action of $G_0$ on $\g_-$
is explicitly given by 
$$ 
  \Ad(B,C)(z,X)=(\det B^{-1}\.z,CXB^{-1}),
$$
which indeed does not depend on the representative matrix.
This shows the restriction to $\g_{-1}\cong\R^{2*}\otimes\R^n$ comes from the
product of the standard representation of $GL(2,\R)$ on $R^2$ and 
of $O(p,q)$ on $\R^n$.
Combining with \eqref{eq2} and \eqref{eq3}, the action of $G_0$ on $\g_-$ is 
compatible with the Lie bracket $\g_{-1}\x\g_{-1}\to\g_{-2}$, i.e.\ the
bracket is indeed $G_0$-equivariant.
%Explicitly, this means
%\begin{equation}			\label{eq4}
%  [CXB^{-1},CYB^{-1}]=\det B^{-1}\.[X,Y] 
%\end{equation}
%for any $B\in GL(2,\R)$, $C\in O(p,q)$, and $X,Y\in\g_{-1}$.

\subsection{Lie contact structures}		\label{2.2}
  \textit{Lie contact structure of signature $(p,q)$} on a smooth manifold $M$ 
  of dimension $2n+1$, $n=p+q$, consists of the following data:
  \begin{itemize}
  \item a contact distribution $H\subset TM$,
  \item two auxiliary vector bundles $L\to M$ and $R\to M$ of rank 2 and $n$,
  \item a bundle metric of signature $(p,q)$ on $R$, where $p+q=n$,
  \item an isomorphism $H\cong L^*\otimes R$,
  \end{itemize}
  such that the Levi bracket is invariant under the action of $O(R_x)\cong
  O(p,q)$ on $H_x$, for each $x\in M$, i.e.\
  $\L(\xi,\eta)=\L(\ga\o\xi,\ga\o\eta)$ for any $\ga\in O(R_x)$ and any
  $\xi,\eta\in L_x^*\otimes R_x$.

As we announced above, the Lie contact structure should coincide with 
the underlying structure corresponding to the parabolic geometry of type
$(G,P)$, i.e. with the parabolic contact structure corresponding to the
contact grading of $\g=\frak{so}(p+2,q+2)$ . 
This is really the case and the equivalence is formulated as Proposition 4.2.5 
in \cite{CS}:
\begin{thm*}				\label{th0}
  Let $G=PO(p+2,q+2)$ and $P\subset G$ be the stabilizer of an isotropic plane.
  Then the category of regular normal parabolic geometries of type $(G,P)$
  is equivalent to the category of Lie contact structures of signature $(p,q)$.
\end{thm*}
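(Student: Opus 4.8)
The plan is to establish the equivalence of categories by invoking the general machinery of parabolic geometries, specialized to the contact grading of $\g=\frak{so}(p+2,q+2)$ computed in \ref{2.1}. Concretely, one uses the fundamental equivalence theorem for parabolic geometries (Cap--Slov\'ak): for a grading in which the first cohomology $H^1(\g_-,\g)$ is concentrated in non-positive homogeneities, the category of regular normal parabolic geometries of type $(G,P)$ is equivalent to the category of \emph{underlying structures}, i.e.\ regular infinitesimal flag structures, which for a contact grading amounts to a contact distribution $H\subset TM$ together with a reduction of structure group of $\operatorname{gr}(TM)$ to $G_0$ compatible with the Levi bracket. For $|2|$-graded contact algebras this cohomological condition is classical, so the real content is to identify the $G_0$-structure on $H$ with the package of data in \ref{2.2}.

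First I would recall from \ref{2.1} that $G_0\cong(GL(2,\R)\x O(p,q))/\{\pm\id\}$ acts on $\g_{-1}\cong\R^{2*}\otimes\R^n$ as the tensor product of the standard representation of $GL(2,\R)$ on $\R^{2*}$ and of $O(p,q)$ on $\R^n$, and on $\g_{-2}\cong\R$ by $\det B^{-1}$. Hence a reduction of the frame bundle of $H$ to $G_0$ is precisely the choice of auxiliary rank-$2$ and rank-$n$ bundles $L,R$ with an identification $H\cong L^*\otimes R$ and an $O(p,q)$-metric on $R$; the ambiguity by $\{\pm\id\}$ is exactly the usual minor indeterminacy, which on the level of data is absorbed by fixing a trivialization of $\Lambda^2 L\otimes(\text{line bundle})$ as the parenthetical remark in the introduction indicates. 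Next I would verify that the regularity of the parabolic geometry translates, via formulas \eqref{eq1}--\eqref{eq3}, into exactly the stated compatibility of the Levi bracket: regularity says the bundle map $\L\colon H\x H\to TM/H$ agrees with the algebraic bracket $[\ ,\ ]\colon\g_{-1}\x\g_{-1}\to\g_{-2}$, and by \eqref{eq2} this bracket is $O(p,q)$-invariant in the sense $\L(\xi,\eta)=\L(\ga\o\xi,\ga\o\eta)$ for $\ga\in O(R_x)$, while \eqref{eq3} pins down how it transforms under the $GL(2,\R)$-factor (thus identifying $TM/H$ with the appropriate density bundle $\Lambda^2 L\otimes\ldots$). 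Conversely, any tuple as in \ref{2.2} manifestly defines a reduction to $G_0$ with the Levi bracket in normal form, hence a regular infinitesimal flag structure.

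Finally I would assemble these identifications into an equivalence of categories: the underlying-structure functor sends a regular normal parabolic geometry of type $(G,P)$ to its Levi bracket together with the induced $G_0$-reduction, which by the above is a Lie contact structure of signature $(p,q)$; morphisms go along for the ride since both sides are base-point-free natural constructions. That this functor is an equivalence is then exactly the cited consequence of the general theorem, once the cohomological hypothesis is checked. The main obstacle --- or rather the only place demanding genuine care --- is the bookkeeping around the $\{\pm\id\}$-quotient and the line-bundle trivialization: one must be sure that the passage between ``$G_0$-structure'' and ``the bundles $L$, $R$ with $H\cong L^*\otimes R$'' is a genuine bijection and not merely a surjection up to a finite ambiguity, which is why the statement and \cite{CS} include the fixed trivialization as part of the data. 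Everything else is a direct unwinding of the matrix description in \ref{2.1} together with the black-box equivalence theorem.
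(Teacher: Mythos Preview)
The paper does not supply its own proof of this theorem: it simply records the statement and attributes it to Proposition~4.2.5 in \cite{CS}. Your proposal is therefore not competing with an argument in the paper but rather sketching what the cited proof amounts to, and in that respect your outline is essentially correct --- invoke the general prolongation/equivalence theorem for regular normal parabolic geometries, check the cohomological hypothesis on $H^1(\g_-,\g)$, and then unwind the $G_0$-representation on $\g_{-1}$ from \ref{2.1} to match the $G_0$-reduction with the bundle data $(L,R,H\cong L^*\otimes R,\text{metric on }R)$.

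One point of bookkeeping is off, however. You assert that ``the statement and \cite{CS} include the fixed trivialization as part of the data'' to absorb the $\{\pm\id\}$-ambiguity, but the definition in \ref{2.2} does \emph{not} include any line-bundle trivialization; the identifications $\La^2L^*\cong TM/H$ and $\Cal S\cong TM/H$ only enter in the alternative characterizations of Propositions~\ref{prop1} and~\ref{prop2}. In the definition as stated, the $\{\pm\id\}$-quotient is harmless for a different reason: since $(-\id,-\id)\in GL(2,\R)\times O(p,q)$ acts trivially on both $\g_{-1}\cong\R^{2*}\otimes\R^n$ and $\g_{-2}$, the effective structure group acting on $\operatorname{gr}(TM)$ is already $G_0$, and a frame reduction to $G_0$ is exactly the package $(L,R,\text{metric},H\cong L^*\otimes R)$ with the Levi-bracket compatibility. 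So your worry about a ``surjection up to finite ambiguity'' is misplaced, and the extra trivialization datum you introduce is not needed for this theorem.
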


\subsection*{Remarks}			\label{rem1}
(1) Note that the section 4.2.5 in \cite{CS} develops according to the
choice $G=O(p+2,q+2)$.
However, different choices of the Lie group to the given Lie algebra coincide
generally up to a (usually finite) cover.
In particular, the freedom in the choice does not affect the local description 
of the underlying structure.

(2) Although we define the Lie contact structure for general $n=p+q$, there
are some specific features in low dimensions. 
For instance, the Lie contact structure of signature $(2,0)$ (on
5-dimensional manifold) is basically equivalent to a non-degenerate CR
structure with indefinite Levi form, which is discussed e.g.\ in 
section 5 in \cite{SY} in some detail.
In our terms, the equivalence is provided by the Lie algebra isomorphism
$\frak{so}(4,2)\cong\frak{su}(2,2)$ and the uniqueness of the contact grading.
Similarly, for $\frak{so}(3,3)\cong\frak{sl}(4,\R)$, the Lie contact structure
of signature $(1,1)$ is equivalent to the Lagrangean contact structure (on a
5-manifold).
The interpretation of $\frak{so}(3,2)\cong\frak{sp}(4)$ is a bit different, 
since the Lie contact structure of signature $(1,0)$ is rather trivial and 
usually excluded from the considerations.
However, the structure corresponding to the contact grading of $\frak{sp}(4)$
is the contact projective structure (on a 3-manifold).
If dimension of the base manifold is grater then or equal to 7, the Lie contact structure
starts to work with no specific issue, up to the only exception in dimension
9, which is due to $\frak{so}(6,2)\cong\frak{so}^*(8)$.
An interested reader may investigate the equivalent structure  behind that isomorphism.

%%%
\subsection{Segre cone}			\label{2.3}
For $H_x\cong L^*_x\otimes R_x$, there is a distinguished subset 
$\Cal C_x\subset H_x$, the so called \textit{Segre cone}, consisting of all the 
linear maps $L_x\to R_x$ of rank one. 
On the level of the Lie algebra $\g=\frak{so}(p+2,q+2)$, the cone $\Cal C_x\subset
H_x$ corresponds to the $G_0$-invariant subset of $\g_{-1}\cong\R^{2*}\otimes\R^n$
consisting of the elements of the from $f\otimes u$ for some $f\in R^{2*}$ and 
$u\in\R^n$. 
It follows that $f\otimes\R^n$ is a maximal linear subspace in the cone (consisting of the 
linear maps $\R^2\to\R^n$ with the common kernel $\ker f$).
For arbitrary elements $X=f\otimes u_1$ and $Y=f\otimes u_2$ from $f\otimes\R^n$, 
the substitution into \eqref{eq1} yields that $[X,Y]=0$, i.e.\ the subspace 
$f\otimes\R^n\subset\g_{-1}$ is isotropic with respect to the 
Lie bracket $\g_{-1}\x\g_{-1}\to\g_{-2}$.
Hence for general rank-one elements $f_1\otimes u_1,f_2\otimes u_2$ from 
$\R^{2*}\otimes\R^n=\g_{-1}$, it follows that 
$$
  [f_1\otimes u_1,f_2\otimes u_2]=|f_1,f_2|\span{u_1,u_2}\.e,
$$
where $|\ ,\ |$ denotes the standard exterior product on $\R^{2*}$, $\span{\ ,\ }$ is
the standard inner product on $\R^n$ of signature $(p,q)$, and $e$ is the generator of 
$\g_{-2}$ as in \ref{2.1}.
This shows the Lie bracket is actually of the form 
$\La^2\R^2\otimes S^2\R^{n*}\otimes\g_{-2}$ and it determines the inner
product on $\R^n$ provided we fix an identification $\La^2\R^2\otimes\g_{-2}\cong\R$.
This is provided by the $G_0$-invariant mapping  $f_1\wedge f_2\otimes
ze\mapsto z|f_1,f_2|$, which also yields a trivialization of the line bundle 
$\La^2L\otimes TM/H$ over any Lie contact manifold $M$.
\begin{prop*}			\label{prop1}
  A Lie contact structure on a smooth manifold $M$ of dimension $2n+1$ 
  is equivalent to the following data:
  \begin{itemize}
  \item a contact distribution $H\subset TM$,
  \item two auxiliary vector bundles $L\to M$ and $R\to M$ of rank 2 and $n$,
  \item an isomorphism $H\cong L^*\otimes R$,
  \item an isomorphism $\La^2L^*\cong TM/H$,
  \end{itemize}
  such that for $\ph\in L_x^*$ the subspace $\ph\otimes R_x\subset H_x$ is
  isotropic with respect to the Levi bracket, i.e.\
  $\L(\ph\otimes\ups_1,\ph\otimes\ups_2)=0$ for any
  $\ups_1,\ups_2\in R_x$.
\end{prop*}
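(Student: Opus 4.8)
The plan is to show equivalence by passing through the homogeneous model at the level of $\g_0$-modules, exactly as the preceding discussion of the Segre cone sets up. The two sets of data differ from those in \ref{2.2} in that the bundle metric on $R$ of signature $(p,q)$ has been replaced by the isomorphism $\La^2L^*\cong TM/H$ together with the isotropy condition on the subspaces $\ph\otimes R_x$. So the whole content of the statement is: \emph{modulo the identification $H\cong L^*\otimes R$, the data of a Levi-compatible bundle metric on $R$ (up to the residual freedom) is the same as the data of a trivialization $\La^2L^*\cong TM/H$ together with the isotropy of rank-one subspaces.} Both directions will be pointwise statements about the vector space $\g_{-1}\cong\R^{2*}\otimes\R^n$ with its $G_0$-equivariant bracket into $\g_{-2}$, so it suffices to verify the linear-algebra equivalence there and then observe everything is $G_0$-equivariant, hence globalizes over $M$ via the usual associated-bundle construction $H\cong\G\x_{G_0}\g_{-1}$, $TM/H\cong\G\x_{G_0}\g_{-2}$.

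First I would treat the forward direction. Given a Lie contact structure in the sense of \ref{2.2}, I already have $H\cong L^*\otimes R$ and a metric $\span{\ ,\ }$ on $R$; the computation preceding the Proposition shows that the Levi bracket, written on rank-one elements, has the form $\L(f_1\otimes u_1, f_2\otimes u_2)=|f_1,f_2|\span{u_1,u_2}\cdot e$, so in particular $\L(\ph\otimes\ups_1,\ph\otimes\ups_2)=0$ because $|\ph,\ph|=0$; this is the isotropy condition. The required isomorphism $\La^2L^*\cong TM/H$ is then the one exhibited there, $f_1\wedge f_2\otimes ze\mapsto z|f_1,f_2|$ (more precisely its inverse, reading off $TM/H$ from $\La^2L^*$); I would just note that this map is $G_0$-invariant — which is the content of \eqref{eq3}, since $\det$ is exactly how $GL(2,\R)$ acts on both $\La^2\R^2$ and on $\g_{-2}$ — so it descends to a bundle isomorphism. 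This direction is essentially a repackaging of the paragraph before the Proposition.

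For the converse — which is where the actual work is — I start from $H\cong L^*\otimes R$, an isomorphism $\La^2L^*\cong TM/H$, and the hypothesis that every $\ph\otimes R_x$ is Levi-isotropic, and I must reconstruct a bundle metric on $R$ of signature $(p,q)$ whose associated Levi bracket is the given one, and show it is unique up to the residual $O(R_x)$-freedom built into the definition \ref{2.2}. Pointwise: on $\R^{2*}\otimes\R^n$ I am handed a skew bilinear bracket $B$ into a fixed line $\g_{-2}$, identified with $\La^2\R^2$ via the chosen isomorphism, such that $B(\ph\otimes u,\ph\otimes v)=0$ for all $\ph,u,v$. Polarizing in $\ph$, $B(\ph_1\otimes u,\ph_2\otimes v)+B(\ph_2\otimes u,\ph_1\otimes v)=0$, so $B$ is skew in the $\R^{2*}$-slots; combined with overall skew-symmetry it is therefore symmetric in the $\R^n$-slots. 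Hence $B$ factors as $\La^2\R^{2*}\otimes S^2\R^{n*}\to\g_{-2}$, and after the identification $\g_{-2}\cong\La^2\R^2$ the $\La^2\R^{2*}$-factor cancels canonically, leaving a symmetric bilinear form $\span{\ ,\ }$ on $\R^n$ — this is the candidate metric. Its nondegeneracy follows from nondegeneracy of the Levi bracket (equivalently of $B$), and in the real case the signature is then forced to be $(p,q)$ by a connectedness/rigidity argument: the structure group $G_0$ acts, $O(p,q)$-conjugacy classes of nondegenerate forms are discrete, and the model has signature $(p,q)$, so every fibre does. Finally I would check that the $O(\R^n)$-ambiguity in choosing the identification $R_x\cong\R^n$ is precisely the ambiguity allowed in \ref{2.2}, and that the whole construction is $G_0$-equivariant hence defines the metric as a section of $S^2R^*$. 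The main obstacle is this converse step — specifically, cleanly extracting the symmetric form from $B$ without the $\La^2L^*\cong TM/H$ trivialization getting tangled with the auxiliary-bundle ambiguities, and nailing down that the signature is globally $(p,q)$ rather than merely locally constant; the rest is bookkeeping with \eqref{eq1}–\eqref{eq3}.
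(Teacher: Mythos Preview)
Your proposal is correct and follows essentially the same route as the paper. The paper packages your polarization step as the bundle decomposition $\La^2 H^*\otimes TM/H\cong(\La^2L\otimes S^2R^*\otimes TM/H)\oplus(S^2L\otimes\La^2R^*\otimes TM/H)$, observes that the isotropy hypothesis kills the second summand, and then uses the trivialization of $\La^2L\otimes TM/H$ to read off the bundle metric on $R$ --- exactly your argument in invariant dress. One small point: your concern about the signature being ``forced to be $(p,q)$'' is misplaced; the data in the statement do not single out a signature in advance, so the constructed metric simply has \emph{some} signature (locally constant, hence constant on components), and that determines which Lie contact structure you have recovered.
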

\begin{proof}
  According to the discussion above, we only need to construct a Lie contact
  structure on $M$ from the later data.
  In general, the Levi bracket is a section of the bundle $\La^2 H^*\otimes TM/H$, 
  which  decomposes   according to the isomorphism $H\cong L^*\otimes R$ as 
  $(\La^2L\otimes S^2R^*\otimes TM/H)\oplus(S^2L\otimes\La^2R^*\otimes TM/H)$.
  Since we assume $\ph\otimes R$ is isotropic with respect to $\L$, for any $\ph$,
  the Levi bracket $\L$ factorizes through the first summand only.
  Since we assume a fixed identification of line bundles $\La^2L^*$ and
  $TM/H$, i.e. a  trivialization of $\La^2L\otimes TM/H$, the Levi
  bracket determines a (non-degenerate) bundle  metric on $R$, which accomplishes 
  the Lie contact structure on $M$. 
  In particular, if $O(R_x)$ is the group of orthogonal transformations of
  $R_x$ according to this inner product, then the Levi bracket is  by construction
  invariant under the action of $O(R_x)$, over each $x\in M$.
\end{proof}

%%%
\subsection{Split quaternions}			\label{2.4}
The algebra of \textit{split quaternions} $\H_s$ is the four-dimensional real algebra
admitting a basis $(1,i,j,k)$ such that $i^2=j^2=1$ and $k=ij=-ji$ (consequently
$k^2=-1$, $kj=-jk=i$, etc.).
As an associative algebra, $\H_s$ is isomorphic to the space of $2\x2$ real
matrices so that the norm on $\H_s$ corresponds to the determinant.
This correspondence is explicitly  given by
\begin{equation}			\label{eq5}
  1\mapsto\pmat{1&0\\0&1},\ i\mapsto\pmat{1&0\\0&-1},\ 
  j\mapsto\pmat{0&1\\1&0},\ k\mapsto\pmat{0&1\\-1&0}.
\end{equation}
In particular, the norm squared of $i,j$, and $k$, equals $-1,-1$, and 1,
respectively.
For general imaginary quaternion $q=ai+bj+ck$, the norm squared is
$|q|^2=-a^2-b^2+c^2$, which  can also be given by $q^2=-|q|^2$.

The \textit{split-quaternionic structure} on a vector bundle $H$ over $M$
is defined as a 3-dimensional subbundle $\Cal Q\subset\End H$ admiting a basis
$(I,J,K)$ such that $I^2=J^2=-\id$ and $K=I\o J=-J\o I$.
Note that, in contrast to the true quaternionic structures, 
split-quaternionic structure may exist on vector budles of any even rank. 

\begin{lem*}			\label{lem0}
  Let $H\to M$ be a vector bundle of rank $2n$. 

  (1) A split-quaternionic structure $\Cal Q$ on $H$ is equivalent to an isomorphism
  $H\cong L^*\otimes R$, where $L\to M$ and $R\to M$ are vector bundles of
  rank 2 and $n$, respectively. 

  (2) In particular, a non-zero element $\xi$ in $H_x\cong L^*_x\otimes R_x$ has rank one 
  if and only if  there is $A\in\Cal Q_x$ such that $A(\xi)=\xi$ (consequently, 
  $A$ is a product structure). 

  (3) Moreover, the maximal linear subspaces contained in the cone of
  rank-one elements in $H_x$ are the $+1$-eigenspaces of the product 
  structures in $\Cal Q_x$.
\end{lem*}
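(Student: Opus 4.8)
The plan is to establish the three statements of Lemma~\ref{lem0} by passing to an explicit model for the split-quaternionic structure via the matrix identification \eqref{eq5}, and then transporting the resulting linear-algebra facts to an arbitrary vector bundle by a frame-bundle argument.

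\textbf{Part (1).} First I would observe that $\R^2{}^*\otimes R_x$ carries a natural right action of $\operatorname{End}(\R^2)$, hence in particular of the image of $\H_s$ under \eqref{eq5}, by $\xi\mapsto\xi\o B$ for $B\in\H_s\cong M_2(\R)$. Setting $I,J,K$ to be right multiplication by the images of $i,j,k$ respectively, one checks directly from $i^2=j^2=\id_{\H_s}$ (note: in the matrix picture $i^2=j^2=\id$, so that right-multiplication composed with itself gives $-\id$ only after the sign coming from reading the convention $I^2=-\id$)—more carefully: the split-quaternion relations $i^2=j^2=1$ translate, under the assignment $I=\cdot\,i$, $J=\cdot\,j$, into $I^2=J^2=\id$, which is the \emph{split} product-structure convention; to land on the stated convention $I^2=J^2=-\id$ one instead uses the imaginary units $k$ and, say, $jk$ and $ik$, whose matrix squares are $-\id$. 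So I would pick the basis of $\Cal Q$ accordingly and verify $I^2=J^2=-\id$, $K=IJ=-JI$ by the matrix computation. This gives one direction. Conversely, given an abstract split-quaternionic structure $\Cal Q\subset\End H$ with basis $(I,J,K)$, the span $\langle\id,I,J,K\rangle\subset\End H$ is a subalgebra isomorphic to $M_2(\R)$; its unique (up to iso) simple module is $\R^2$, so $H$ becomes a module over $M_2(\R)$ and hence $H\cong\R^2\otimes R$ for some multiplicity space $R$ of dimension $n$, naturally $H\cong L^*\otimes R$ after dualizing and setting $L=\R^2{}^*$ as a trivial bundle—but to make $L$ an honest bundle rather than a trivial one I would do this construction fiberwise and assemble, using that the structure group reduction from $GL(2n)$ to $GL(2,\R)\times GL(n,\R)$ determined by $\Cal Q$ is exactly the datum of the two bundles $L,R$. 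The smoothness/global statement follows because $\Cal Q$ is a smooth subbundle, so the associated reduction is smooth.

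\textbf{Parts (2) and (3).} For (2), working in the model $H_x=\R^2{}^*\otimes R_x$, a nonzero $\xi=f\otimes u$ has rank one; the claim is that $\xi$ has rank one iff some $A\in\Cal Q_x$ fixes it. If $\xi=f\otimes u$, I want $B\in\H_s$ with $f\o B=f$ (i.e.\ $B^t$ fixes the vector dual to $f$) and with $B$ lying in the imaginary part so that $A=\cdot\,B\in\Cal Q_x$; since the imaginary split quaternions of positive norm square are exactly the trace-free matrices $B$ with $\det B=-1$, and any such $B$ has eigenvalues $\pm1$, one can choose $B$ with prescribed $+1$-eigenline—namely the line $\ker f\subset\R^2$ turned appropriately—so that $f\o B=f$; this $A$ then satisfies $A\xi=\xi$, and since $A^2=\id$ (as $B^2=\id$ for such $B$) $A$ is a product structure. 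Conversely if $A\xi=\xi$ with $A=\cdot\,B$, $B$ imaginary, then $B\neq\pm\id$ forces $B$ to have a one-dimensional $+1$-eigenspace, so the equation $f\o B=f$ forces $f$ to be supported on that eigenline, whence $\xi\in(\text{line})\otimes R_x$ has rank one. For (3): by (2) the cone of rank-one elements is $\bigcup_{A}\operatorname{Eig}_{+1}(A)$ over product structures $A\in\Cal Q_x$, and for fixed $A=\cdot\,B$ the $+1$-eigenspace is $\{f\otimes u: f\o B=f\}=\ell_B^\perp\otimes R_x$ where $\ell_B^\perp$ is the $+1$-eigenline of $B^t$; this is an $n$-dimensional linear subspace lying entirely in the cone, and any linear subspace of the cone lies in a single such $\ell\otimes R_x$ because two rank-one elements $f_1\otimes u_1$, $f_2\otimes u_2$ with linearly independent $f_1,f_2$ have a rank-two sum, so maximality pins it down exactly to these eigenspaces.

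\textbf{Main obstacle.} The only genuinely delicate point is getting the \emph{signs and the convention} right in Part (1): the paper's split-quaternionic structure uses $I^2=J^2=-\id$, but the natural right action of the split quaternions on $\R^2{}^*\otimes R$ via \eqref{eq5} produces elements squaring to $+\id$ for the "split" directions; so I must be careful to choose, inside the four-dimensional algebra generated by that action, a basis of the $3$-dimensional imaginary part that realizes the stated signature of signs, and then check that the resulting $\Cal Q$ is the \emph{same} subbundle of $\End H$ regardless of which admissible basis is chosen (it is, since $\Cal Q$ is intrinsically the imaginary part of the $M_2(\R)$-subalgebra). Everything else—the module-theoretic reconstruction of $L$ and $R$, and the eigenspace bookkeeping in (2)–(3)—is routine linear algebra done fiberwise and then globalized by smoothness of the subbundle $\Cal Q$.
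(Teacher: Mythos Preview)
Your proposal is correct, and your treatment of parts (2) and (3) follows essentially the same line as the paper: both work in the model $H_x\cong\R^{2*}\otimes\R^n$, interpret $A\in\Cal Q_x$ as right composition with a trace-free $2\times 2$ matrix, and argue via the eigenvalues of that matrix (the paper phrases this as ``$\lambda^2+\det A=0$, so $1$ is an eigenvalue iff $\det A=-1$''). For the converse in part (1), however, the paper takes a more elementary and explicit route than your module-theoretic one. Rather than invoking that $\langle\id,I,J,K\rangle\cong M_2(\R)$ has a unique simple module and then globalizing via a structure-group reduction, the paper simply picks the product structure $I$, sets $R:=H^+$ (the $+1$-eigenbundle of $I$) and $L:=\langle I,J\rangle^*\subset\Cal Q^*$, and verifies directly that the evaluation map $\ph\otimes v\mapsto\ph(v)$ from $L^*\otimes R$ to $H$ is an isomorphism, using that $J$ swaps $H^+$ and $H^-$. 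This construction is concrete and needs no representation theory; your approach is more invariant but leans on Wedderburn-type facts and requires an extra word on why the fiberwise multiplicity spaces assemble into a smooth bundle. Your concern about the sign convention is well placed: the paper's stated definition ($I^2=J^2=-\id$) is inconsistent with its own proof, which treats $I$ as a product structure; the intended reading is $I^2=J^2=\id$, $K^2=-\id$, matching $i^2=j^2=1$, $k^2=-1$ in \eqref{eq5}. As you correctly observe, this ambiguity does not affect the subbundle $\Cal Q$ itself, only the labeling of a basis.
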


\begin{proof}
  (1) Let $H\cong L^*\otimes R$.
  Since $L_x\cong\R^2$  and $R_x\cong\R^n$, for each $x\in M$, the natural
  action of $\H_s$ on $\R^2$ extends, by the composition, to the action on 
  $\R^{2*}\otimes\R^n$.
  Denoting by $I,J$, and $K$ the endomorphisms of $H_x\cong\R^{2*}\otimes\R^n$
  corresponding to the elements $i,j$, and $k$ from \eqref{eq5}, we obtain a
  split-quaternionic structure $\Cal Q_x:=\span{I,J,K}$.

  Conversely, let a split-quaternionic structure $\Cal Q=\span{I,J,K}$ be given. 
  For each $x\in M$, the subspace $H_x\subset T_xM$ decomposes to the eigenspaces 
  $H_x^+\oplus H_x^-$ with respect to the product structure $I$.
  Let us define the auxiliary vector bundles over $M$ to be $R:=H^+\subset H$ and
  $L:=\span{I,J}^*$, the dual of $\span{I,J}\subset\Cal Q$.
  (Obviously, $R$ and $L$ have got rank $n$ and 2, respectively.)
  We claim that $L^*\otimes R\cong H$ under the mapping $\ph\otimes X\mapsto
  \ph(X)$:
  By definition, $\ph\in L^*$ is a linear combination of the endomorphisms $I,J$ 
  and the restriction of $I$ to $R=H^+$ is the identity. 
  Since $IJ=-JI$, it follows that $J$ swaps the subspaces $H^+$ and $H^-$.
  Hence any $\xi\in H_x$ can be uniquely written as $\xi=\ups_1+J\ups_2$, for
  $\ups_1,\ups_2\in R_x$, which is the image of
  $I\otimes\ups_1+J\otimes\ups_2$ under the mapping above.
  Because it is a linear map between the spaces of the same dimension, the
  claim follows.

  (2)
  As above, let us interpret $\xi\in H_x$ as a
  linear map $\R^2\to\R^n$ and $A\in\Cal Q_x$ as an endomorphisms
  $\R^2\to\R^2$.
  Hence $A(\xi)$ is interpreted as the composition $\xi\o A$.
  Easily, $\xi\o A=\xi$ for some $A$ if and only if $A$ is the 
  identity or $\ker\xi$ is non-trivial and invariant under $A$ and there 
  further is a non-zero vector fixed by $A$.
  In other words, the later condition means that $A$ has two real eigenvalues
  1 and $\la$ so that the eigenspace corresponding to $\la$ coincides with
  $\ker\xi$. %(possibly, $\la$ may be 1 as well).
  Considering an endomorphisms $A=aI+bJ+cK$, the eigenvalues of $A$ turn
  out to be the solutions of $\la^2+\det A=0$.
  Hence 1 is an eigenvalue of $A$ if and only if $\det A=-1$. 
  Consequently, the second eigenvalue is $-1$ and $A$ is a skew reflection.
  In terms of split quaternions, $|A|^2=-1$ and $A$ corresponds to a product 
  structure on $H_x$.
  Note that for any one-dimensional subspace $\ell$ in $\R^2$ there is a 
  skew reflection $A=aI+bJ+cK$ whose eigenspace to $-1$ is $\ell$.
  Altogether, since $A$ is never the identity, 
  the equivalence follows.

  (3)
  For the last statement, note that the maximal linear subspaces in the Segre
  cone $\Cal C_x$  have dimension $n$ and all of them are parametrized by
  one-dimensional subspaces in $\R^2$ as follows.
  For any line $\ell\subset\R^2$, let us consider the set $W_\ell$ of all linear maps
  $\xi:\R^2\to \R^n$ so that  $\ker\xi=\ell$.
  Indeed, $W_\ell\subset H_x$ is a linear subspace of dimension $n$ whose
  each  element has got rank one.
  As before, there is a skew reflection $A$ in $\R^2$ so that $A|_\ell=-\id$.
  Hence $\xi\o A=\xi$ for any $\xi\in W_\ell$, i.e. $W_\ell$ is the eigenspace
  to 1 of the corresponding product structure on $H_x$.
  The converse is evident as well.
\end{proof}

For any $A\in\Cal Q_x$, $A\o A$ is a multiple of the identity, hence there is a
natural inner product in each fibre $\Cal Q_x$; the corresponding norm is 
defined by $A\o A=-|A|^2\id$, the signature is $(1,2)$.
Let us denote by $\Cal S$ the line bundle over $M$, whose fiber consists of 
all real multiples of this inner product on $\Cal Q_x$.
For a Lie contact manifold $M$, the corresponding split-quaternionic
structure $\Cal Q$ on $H\subset TM$ has to be compatible with the Levi
bracket, which is expressed by \eqref{eq3} on the level of Lie algebra.
Since $\det A$ corresponds to $|A|^2$ under the above identifications, 
we conclude with the following description of the Lie contact structure.
\begin{prop*}			\label{prop2}
  A Lie contact structure on a smooth manifold $M$ of dimension $2n+1$ 
  is equivalent to the following data:
  \begin{itemize}
  \item a contact distribution $H\subset TM$,
  \item a split-quaternionic structure $\Cal Q=\span{I,J,K}\subset\End H$,
  \item an identification $\Cal S\cong TM/H$,
  \end{itemize}
  such that the Levi bracket is compatible as
  $\L(A(\xi),A(\eta))=|A|^2\.\L(\xi,\eta)$, for any $\xi,\eta\in H_x$ and
  $A\in\Cal Q_x$ in each $x\in M$.
\end{prop*}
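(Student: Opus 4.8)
The plan is to prove Proposition \ref{prop2} by reducing it to the already-established Proposition \ref{prop1} via Lemma \ref{lem0}, so the argument amounts to translating the two characterizations into each other dictionary-style. Given the data of Proposition \ref{prop2}, Lemma \ref{lem0}(1) produces an isomorphism $H\cong L^*\otimes R$ from the split-quaternionic structure $\Cal Q$; moreover, by Lemma \ref{lem0}(2)--(3), the maximal linear subspaces in the Segre cone are exactly the $+1$-eigenspaces of the product structures in $\Cal Q_x$, i.e.\ the subspaces $\ph\otimes R_x$ for $\ph\in L^*_x$. Therefore I would first observe that the compatibility condition $\L(A(\xi),A(\eta))=|A|^2\L(\xi,\eta)$, applied to a product structure $A$ with $|A|^2=-1$ fixing a given $\ph\otimes R_x$, forces $\L$ to vanish on $\ph\otimes R_x$ (since then $\L(\xi,\eta)=\L(A\xi,A\eta)=-\L(\xi,\eta)$ for $\xi,\eta$ in that eigenspace). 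Hence the isotropy hypothesis of Proposition \ref{prop1} holds, and it remains only to supply the identification $\La^2L^*\cong TM/H$ required there out of the identification $\Cal S\cong TM/H$ assumed here.

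The second step is to match the two line-bundle identifications. Here I would use that $\Cal S$, the bundle of multiples of the natural inner product on $\Cal Q$, is canonically isomorphic to $\La^2 L^*$: concretely, for the standard model one has $\span{I,J}^*=L$ by construction in Lemma \ref{lem0}, and the determinant/norm form on $\Cal Q$ restricted to the plane $\span{I,J}$ is a nonzero element of $S^2\span{I,J}^* = S^2 L$, while $K=I\o J$ identifies $\La^2\span{I,J}$ with the line $\span{K}$, so pairing these gives a canonical isomorphism $\Cal S\cong\La^2 L^*$ (equivalently, the $G_0$-invariant map $f_1\wedge f_2\otimes ze\mapsto z|f_1,f_2|$ from \ref{2.3} is exactly the present compatibility constant $|A|^2=\det A$ read through \eqref{eq3}). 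Composing, $\La^2L^*\cong\Cal S\cong TM/H$, which is precisely the fourth datum of Proposition \ref{prop1}. Thus Proposition \ref{prop1} applies and yields a Lie contact structure; conversely, starting from a Lie contact structure one reads off $\Cal Q$ from Lemma \ref{lem0}(1) and $\Cal S\cong\La^2L^*\cong TM/H$, and the compatibility \eqref{eq3}, together with the fact that $\det A$ corresponds to $|A|^2$, gives $\L(A\xi,A\eta)=|A|^2\L(\xi,\eta)$ for all $A\in\Cal Q_x$ (not only product structures), completing the equivalence.

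The only genuinely substantive point — and the step I expect to be the main obstacle — is verifying that the compatibility condition for \emph{all} $A\in\Cal Q_x$ follows from what one gets out of Proposition \ref{prop1}, which only directly controls $\L$ on the rank-one cone and hence, a priori, only the behaviour under product structures. To close this I would argue algebraically on the model: since $\L$ factorizes through $\La^2 L\otimes S^2 R^*\otimes TM/H$ (as in the proof of Proposition \ref{prop1}), and the $GL(2,\R)$-action on $\g_{-1}$ is the standard one, the identity $\L(XA,YA)=\det A\cdot\L(X,Y)$ of \eqref{eq3} holds for every $A\in\frak{gl}(2,\R)$ by a direct computation in the model; the $G_0$-equivariance established in \ref{2.1} then transports this to every fibre, and $\det A=|A|^2$ under \eqref{eq5} upgrades it to the stated form $\L(A\xi,A\eta)=|A|^2\L(\xi,\eta)$. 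The remaining items — that $\Cal Q$ has rank $3$ of signature $(1,2)$, that $I^2=J^2=-\id$, $K=I\o J=-J\o I$, and that the construction is inverse to Lemma \ref{lem0}(1) up to isomorphism — are routine and I would dispatch them with a sentence each referring back to \S\ref{2.4} and the proof of Lemma \ref{lem0}.
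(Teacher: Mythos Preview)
Your proposal is correct and follows essentially the same route as the paper: use Lemma \ref{lem0} to get $H\cong L^*\otimes R$, apply the compatibility $\L(A\xi,A\eta)=|A|^2\L(\xi,\eta)$ with a product structure $A$ fixing $\ph\otimes R_x$ to deduce $\L(\xi,\eta)=-\L(\xi,\eta)=0$ there, and then match $\Cal S\cong\La^2L^*$ via $L^*_x=\span{I,J}$ to feed into Proposition \ref{prop1}; the converse direction is exactly the discussion preceding the Proposition, namely \eqref{eq3} together with $\det A=|A|^2$. Your third paragraph's ``substantive point'' is not really an obstacle: once $\L$ is known to factor through $\La^2L\otimes S^2R^*\otimes TM/H$ (which is what Proposition \ref{prop1} establishes), the identity \eqref{eq3} is automatic for every $A\in\frak{gl}(2,\R)$, so no extra work is needed beyond what you already sketch.
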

\begin{proof}
  According to the previous discussion, we just construct a Lie contact
  structure from the later data.
  By the lemma above, the split-quaternionic structure $\Cal
  Q$ provides an identification $H\cong L^*\otimes R$ where $L$ and $R$ are
  appropriate vector bundles.
  (Namely, $R_x\subset H_x$ is defined as the $+1$-eigenspace of the product
  structure $I$ and $L_x$ as the dual of $\span{I,J}$.)
  Now, for $\ph\in L^*_x$, let us consider the subset 
  $W:=\ph\otimes R_x\subset H_x$. 
  This is a linear $n$-dimensional subspace contained in the Segre cone of 
  $H_x$ and there is  a product structure 
  $A\in\Cal Q_x$ such that $A|_{W}=\id$.
  Hence for any $\xi,\eta\in W$, the compatibility
  $\L(A(\xi),A(\eta))=|A|^2\.\L(\xi,\eta)$ yields $\L(\xi,\eta)=-\L(\xi,\eta)$
  and so $\L(\xi,\eta)=0$.
  In other words, $\ph\otimes R_x$ is an isotropic subspace in $H_x$ for any $\ph$.
  
  According to Proposition \ref{prop1}, in order to determine a Lie contact 
  structure on $M$ we further need to identify the line bundles
  $\La^2L^*$ and $TM/H$ over $M$.
  However, this is provided by the identification $\Cal S\cong TM/H$ assumed above 
  and the identification $\Cal S\cong\La^2L^*$ which follows:
  By definition, $L^*_x$ is identified with $\span{I,J}\subset\Cal Q_x$.
  Hence the correspondence between the volume forms on $\span{I,J}$ and the
  multiples of the natural inner product on $\Cal Q_x$ is obvious.
\end{proof}

%%%
\section{Chains}				\label{3}

\subsection{Path geometry of chains}		\label{3.1}
For general parabolic contact geometry $(\G\to M,\om)$ of type $(G,P)$, the
\textit{chains} are defined as projections of flow lines of constant vector 
fields on $\G$ corresponding to non-zero elements of $\g_{-2}$, where
$\g_{-2}$ is the 1-dimensional subspace from the contact grading of $\g$ as
above.
In the homogeneous model $G/P$, the chains are the curves of type 
$t\mapsto g\exp(tX)P$, for $g\in G$ and $X\in\g_{-2}$.
As unparametrized curves, chains are uniquely determined by a tangent direction 
in a point, \cite[section 4]{CSZ}.
By definition, chains are defined only for directions which are transverse to
the contact distribution $H\subset TM$.
In classical terms, the family of chains defines a path geometry on
$M$ restricted, however, only to the directions transverse to $H$.

A \emph{path geometry} on $M$ is a smooth family of unparametrized curves
(paths) on $M$ with the
property that for any point $x\in M$ and any direction $\ell\subset T_xM$ 
there is unique path $C$ from the family such that $T_xC=\ell$.
It turns out that path geometry on $M$ can be equivalently described as a 
parabolic geometry over $\P TM$ of type $(\tG,\tP)$, where
$\tG=PGL(m+1,\R)$, $m=\dim M$, and $\tP$ is the parabolic subgroup as follows.
Let us consider the grading of $\tg=\frak{sl}(m+1,\R)$ which is schematically 
described by the block decomposition with blocks of sizes 1, 1, and $m-1$:
$$
  \pmat{\tg_0&\tg^E_1&\tg_2\\\tg^E_{-1}&\tg_0&\tg^V_1\\\tg_{-2}&\tg^V_{-1}&\tg_0}.
$$
Then $\tp=\tg_0\oplus\tg_1\oplus\tg_2$ is  a parabolic subalgebra of $\tg$ and 
$\tP\subset\tG$ is the subgroup represented by block upper triangular
matrices so that its Lie algebra is $\tp$.
As usual, the parabolic geometry associated to the path geometry on $M$ is uniquely 
determined (up to isomorphism) provided we consider it is regular and 
normal. 

Denote by $\tilde M=\P_0TM$ the open subset of $\P TM$ consisting of all
lines in $TM$ which are transverse to the contact distribution $H\subset TM$.
Now the family of chains on $M$ gives rise to a parabolic geometry
$(\tilde\G\to\tilde M,\tom)$ of type $(\tG,\tP)$ which we call the \textit{path
geometry of chains} and which we are going to describe in a direct way
extending somehow the Cartan geometry $(\G\to M,\om)$.
For this reason it is crucial to observe that $\tilde M$ is naturally
isomorphic to the quotient bundle $\G/Q$, where $Q\subset P$ is the stabilizer
of the 1-dimensional subspace $\g_{-2}\subset\g_-$ under the action  induced from 
the adjoint representation.
Evidently, the Lie algebra of $Q$ is $\q=\g_0\oplus\g_2$.
Hence the couple $(\G\to\tilde M,\om)$ forms a Cartan (but not parabolic)
geometry of type $(G,Q)$.

%%%
\subsection{Induced Cartan connection}		\label{3.2}
Starting with the homogeneous model $G/P$ of the parabolic contact geometry, 
let $(\tilde\G\to\widetilde{G/P},\tom)$ be the regular normal parabolic
geometry of type $(\tG,\tP)$ corresponding to the path geometry of chains.
Since $\widetilde{G/P}=\P_0T(G/P)$ is isomorphic to the homogeneous space
$G/Q$, the Cartan geometry $(\tilde\G\to G/Q,\tom)$ is homogeneous under the
action of the group $G$.
Hence it is known there is a pair $(i,\al)$ of maps, consisting of a Lie group
homomorphism $i:Q\to\tP$ %, which we consider to be infinitesimally injective,
and a linear map $\al:\g\to\tg$, so that 
$\tilde\G\cong G\x_Q\tP$ and $j^*\tom=\al\o\mu$, where $j$ is the canonical 
inclusion $\G\hookrightarrow\G\x_Q\tP$ and $\mu$ is the Maurer--Cartan form on
$G$.
In particular, the pair $(i,\al)$ has to satisfy the following conditions:
\begin{enumerate}
\item $\al\o\Ad(h)=\Ad(i(h))\o\al$ for all $h\in Q$,
\item the restriction $\al|_\q$ coincides with $i':\q\to\tp$, the derivative of $i$,
\item the map $\underline{\al}:\g/\q\to\tg/\tp$ induced by
  $\al$ is a linear isomorphism.
\end{enumerate}

On the other hand, any pair of maps $(i,\al)$ which are compatible in the above 
sense gives rise to a functor from Cartan geometries of type $(G,Q)$ to 
Cartan geometries of type $(\tG,\tP)$.
There is an easy control over the natural 
equivalence of functors associated to different pairs, see section 3 in \cite{CZ}.

%\subsection{Effect on the curvature}		\label{3.3}
If $\ka$ is the curvature function of the Cartan geometry $(\G\to M,\om)$, 
then the curvature function $\tilde\ka$ of the Cartan geometry induced by
$(i,\al)$ is completely determined by $\ka$ and the contribution of $\al$.
More specifically, this is given by 
\begin{equation}		\label{eq6}
  \tilde\ka\o j=\al\o\ka\o\underline{\al}^{-1} +\Psi_\al,
\end{equation}
where $\Psi_\al:\La^2(\tg/\tp)\to\tg$ is defined as follows.
Consider a bilinear map $\g\x\g\to\tg$ defined by 
\begin{equation*}
  (X,Y)\to [\al(X),\al(Y)]-\al([X,Y]).
\end{equation*}
The map is obviously skew symmetric and, due to the compatibility conditions on
$(i,\al)$, it factorizes to a well-defined $Q$-equivalent map $\g/\q\x\g/\q\to\tg$.
The map $\Psi_\al$ is then obtained according to  the isomorphism 
$\underline{\al}^{-1}:\tg/\tp\to\g/\q$.
Note that $\Psi_\al$ vanishes if and only if $\al$ is a homomorphism of Lie algebras.

\subsection{The pair}				\label{3.4}
Here we describe the pair $(i,\al)$ for Lie contact structures explicitly.
Considering the dimension of the base manifold $M$ is $m=2n+1$, we have to consider 
$G=PO(p+2,q+2)$, $p+q=n$, and $\tG=PGL(2n+2,\R)$.
According to the definition in \ref{3.1}, the subgroup $Q\subset P$ 
is represented by the block matrices of the form
\begin{equation*}
  \pmat{B&0&wC\Bbb J\\ 0&C&0\\ 0&0&(B^{-1})^t},
\end{equation*}
where $\Bbb J=\pmat{0&1\\-1&0}$, $w\in\R$, $B\in GL(2,\R)$, and $C\in O(p,q)$.
If we denote $\be:=\det B$ and substitute $B=\pmat{p&r\\s&q}$, then the two maps 
$i:Q\to\tP$ and $\al:\g\to\tg$ are defined explicitly by
\begin{eqnarray*}
i\pmat{
p&r&0&-rw&pw\\ 
s&q&0&-qw&sw\\ 
0&0&C&0&0\\ 
0&0&0&\frac q\be&-\frac s\be\\ 
0&0&0&-\frac r\be&\frac p\be} 
&:=& \pmat{
\sqrt{|\be|}&-w\sqrt{|\be|}&0&0\\ 
0&\frac1{\sqrt{|\be|}}&0&0\\ 
0&0&\frac q{\sqrt{|\be|}} C&-\frac s{\sqrt{|\be|}} C\\ 
0&0&-\frac r{\sqrt{|\be|}} C&\frac p{\sqrt{|\be|}} C},
\end{eqnarray*}
\begin{eqnarray*}
\al\pmat{
a&b&U_1&0&w\\ 
c&d&U_2&-w&0\\ 
X_1&X_2&D&\I_{p,q} U_1^t&\I_{p,q} U_2^t\\ 
0&z&X_1^t\I_{p,q}&-a&-c\\
-z&0&X_2^t\I_{p,q}&-b&-d} 
&:=& \pmat{
\frac{a+d}2&-w&\frac12U&\frac12V\\
z&-\frac{a+d}2&-\frac12Y^t\I_{p,q}&\frac12X^t\I_{p,q}\\ 
X_1&-\I_{p,q} U_2^t&D+\frac{d-a}2\I_n&-c\I_n\\
X_2&\I_{p,q} U_1^t&-b\I_n&D+\frac{a-d}2\I_n}.
\end{eqnarray*}

\begin{prop*}			\label{lem1}
  The map $i:Q\to\tilde P$ is an injective Lie group homomorphism, 
  $\al:\g\to\tg$ is linear, and the pair $(i,\al)$ satisfies the conditions 
  (1)--(3) from \ref{3.2}. 
  \end{prop*}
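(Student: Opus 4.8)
The plan is to verify the three conditions essentially by direct computation, but to organize the work so that the computations stay manageable. First I would establish that $i$ is a well-defined group homomorphism: the matrices on the left in the definition of $i$ are exactly the elements of $Q$ written out in the block form inherited from \ref{2.1} (note the upper-right $2\times 2$ block $wC\Bbb J$ reflects how the $\g_2$-part interacts with $C$ and $B$), and the matrices on the right lie in $\tP$ because they are block upper triangular in the $1,1,2n$ decomposition of $\tg=\frak{sl}(2n+2,\R)$. One checks $i$ respects products by multiplying two such $Q$-matrices, computing the resulting $(B',\be',w')$, and comparing with the product of the corresponding $\tP$-matrices; the square-root factors $\sqrt{|\be|}$ are designed precisely so that $\sqrt{|\be_1\be_2|}=\sqrt{|\be_1|}\sqrt{|\be_2|}$ makes this work, modulo the sign ambiguity which is absorbed by passing to $PGL$. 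Injectivity is immediate since $B$, $C$, and $w$ can all be recovered from the image. That $\al$ is linear is visible by inspection: every entry of the right-hand matrix is a linear function of the entries $a,b,c,d,z,w,U_i,X_i,D$ on the left (here $U=(U_1,U_2)$, $X=(X_1,X_2)$, $V$, $Y$ being the obvious abbreviations).

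Next I would check condition (2), $\al|_\q = i'$. Differentiating $i$ at the identity means setting $B=\I_2+tB_0+O(t^2)$, $w=tw_0+O(t^2)$, $C=\I_n+tC_0+O(t^2)$ with $C_0\in\frak{so}(p,q)$, computing $\be = 1+t\,\tr B_0+O(t^2)$, hence $\sqrt{|\be|}=1+\tfrac t2\tr B_0+O(t^2)$, and reading off the linear term of the right-hand matrix. Comparing this with $\al$ applied to the general element of $\q=\g_0\oplus\g_2$ (i.e.\ the left-hand matrix in the definition of $\al$ with $U_i=0$, $X_i=0$, $z=0$, and $A=\pmat{a&b\\c&d}$ arbitrary, $D\in\frak{so}(p,q)$, $w$ free) should give agreement; the terms $\tfrac{a+d}2$, $\tfrac{d-a}2\I_n$ etc.\ in $\al$ are exactly the derivative of the $\sqrt{|\be|}$ and $q/\sqrt{|\be|}$ type entries of $i$. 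This is a finite, if slightly tedious, matching of linear expressions.

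For condition (3) I would exhibit the induced map $\underline\al:\g/\q\to\tg/\tp$ and check it is a linear isomorphism of $(2n+1)$-dimensional spaces. A complement to $\q$ in $\g$ is $\g_{-2}\oplus\g_{-1}\oplus\g_1$, parametrized by $z$, $(X_1,X_2)$, and $(U_1,U_2)$; a complement to $\tp$ in $\tg$ is $\tg_{-2}\oplus\tg^E_{-1}\oplus\tg^V_{-1}$, i.e.\ the strictly-lower blocks. Reading off from the formula for $\al$: the $z$ entry maps to the $\tg_{-2}$ slot as $z\mapsto z$; the pair $(X_1,X_2)$ maps to the two lower-block $n$-vectors as $X_1\mapsto X_1$, $X_2\mapsto X_2$; and the pair $(U_1,U_2)$ shows up in the lower blocks via $-\I_{p,q}U_2^t$ and $\I_{p,q}U_1^t$, which is invertible since $\I_{p,q}$ is. So $\underline\al$ is block-triangular with invertible diagonal blocks, hence an isomorphism.

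The main obstacle will be condition (1), $\al\o\Ad(h)=\Ad(i(h))\o\al$ for all $h\in Q$. Because both sides are algebraic in $h$ and linear in the $\g$-argument, and because $Q$ is generated (up to components) by the $G_0$-part ($B,C$) and the one-parameter $\g_2$-subgroup ($w$), it suffices to verify equivariance separately for $C\in O(p,q)$, for $B\in GL(2,\R)$, and for the $w$-subgroup. The $C$-equivariance is the cleanest: $\Ad(C)$ acts on $\g$ by the various conjugations $D\mapsto CDC^{-1}$, $X_i\mapsto CX_i$, $U_i\mapsto U_iC^{-1}$ (up to the $\I_{p,q}$-twists dictated by \ref{2.1}), and on the $\tg$-side $i(C)$ acts block-diagonally by the corresponding $C$'s, so matching is direct once one is careful about where $\I_{p,q}$ sits. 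The $B$-equivariance is the delicate part: $\Ad(B)$ mixes the rows/columns of $X=(X_1,X_2)$, of $U=(U_1,U_2)$, of $A$, and scales $z,w$ by $\det B^{\pm1}$, while $i(B)$ is the somewhat unusual matrix with entries $p/\sqrt{|\be|}$, $q/\sqrt{|\be|}$, etc., acting by conjugation on the $\tg$-side; one must check the $4n+\dots$ scalar identities that result, and this is where the specific form of $\al$ — in particular the appearance of $\tfrac{a+d}2$ and the $\I_n$-multiples of $a-d$, and the pairing of $X_1$ with $-\I_{p,q}U_2^t$ versus $X_2$ with $\I_{p,q}U_1^t$ — is forced. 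I would do the $w$-subgroup last; there $\Ad(\exp(we))$ is unipotent and its effect on $\g$ is a short polynomial, matched against the unipotent conjugation by the $w$-part of $i$. Throughout, the sign ambiguities from the $PO$/$PGL$ quotients are harmless because all the formulas are checked at the level of representative matrices and the ambiguous signs cancel. I expect no conceptual difficulty beyond bookkeeping, but the $B$-equivariance computation is the one genuinely error-prone step and is where I would spend the most care.
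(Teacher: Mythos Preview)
Your proposal is correct and follows essentially the same approach as the paper, which also proceeds by direct verification: the paper simply asserts that $i$ is well defined, smooth, injective, and a homomorphism by direct computation, that $\al$ is visibly linear, that condition (1) is ``a little tedious but very straightforward checking'', condition (2) ``an easy exercise'', and condition (3) follows by restricting $\al$ to the complement $\g_-\oplus\g_1\cong\g/\q$. Your write-up is considerably more detailed in organizing the computations (in particular the reduction of condition (1) to generators of $Q$), but the underlying strategy is identical.
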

  Hence the pair $(i,\al)$ gives rise to an extension functor from Cartan
  geometries of type $(G,Q)$ to Cartan geometries of type $(\tilde G,\tilde P)$.
\begin{proof}
  The map $i$ 
  is obviously well defined, i.e.\ the image of an element of $Q\subset PO(p+2,q+2)$ 
  does not depend on the representative matrix.
  Further, $i$ is smooth and injective and a direct computation shows this is a 
  homomorphism of Lie groups.
  The map $\al$ is linear and the compatibility of the pair
  $(i,\al)$ follows as follows:
  (1) involves a little tedious but very straightforward checking,
  (2) is an easy exercise,
  (3) follows from $\g/\q\cong\g_-\oplus\g_1$ and $\tg_-\cong\tg/\tp$,
  restricting $\al$ to $\g_-\oplus\g_1$.
\end{proof}

\subsection{The properties}	\label{3.5}
Here we analyze the map $\Ps_\al:\La^2(\tg/\tp)\to\tg$ from \ref{3.2}.
For this reason we need a bit of notation:

As a linear space, $\tg/\tp$ can be identified with
$\tg_-=\tg_{-1}^E\oplus\tg_{-1}^V\oplus\tg_{-2}$. 
Using brackets in $\tg$, we may identify $\tg_{-1}^V$ with
$\tg_1^E\otimes\tg_{-2}$ if necessary. 
We will further view $\tg_{-2}$ as $\R^{2n}=\R^n\x\R^n$ and correspondingly 
write $X\in\tg_{-2}$ as $\pmat{X_1\\X_2}$ for $X_1,X_2\in\R^n$. 
Next, the semi-simple part $\tg_0^{ss}$ of $\tg_0$ is isomorphic to
$\frak{sl}(2n,\R)$ and the restriction of the adjoint representation of
$\tg_0^{ss}$ on $\tg_{-2}$ coincides with the standard representation of 
$\frak{sl}(2n,\R)$ on $\R^{2n}$.
Finally, by $\span{\ ,\ }$ we denote the standard inner product of signature
$(p,q)$ on $\R^n$ as above, i.e.\ $\span{X_1,X_2}=X_1^t\I_{p,q}X_2$ for any
$X_1,X_2\in\R^n$.

\begin{lem*}			\label{lem2}
  Viewing the map $\Ps_\al$ from \ref{3.2} as an element of 
  $(\tg_-)^*\wedge(\tg_-)^*\otimes\tg$,  then
  it lies in the subspace $(\tg_{-1}^V)^*\wedge(\tg_{-2})^*\otimes\tg_0^{ss}$. 
  Denoting by $W_0$ a non-zero element of $\tg_1^E$, then the trilinear map 
  $\tg_{-2}\x \tg_{-2}\x \tg_{-2}\to\tg_{-2}$
  defined by $(X,Y,Z)\mapsto [\Ps_{\al}(X,[Y,W_0]),Z]$ is (up to a non-zero
  multiple) the complete symmetrization of the map 
  $$
  (X,Y,Z)\mapsto \pmat{\span{X_1,Y_2}Z_1-\span{X_1,Y_1}Z_2\\ 
  \span{X_2,Y_2}Z_1-\span{X_1,Y_2}Z_2}.
  $$  
\end{lem*}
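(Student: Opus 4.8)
The plan is to compute $\Ps_\al$ directly from the explicit formula for $\al$ given in \ref{3.4}, exploiting the structure of the compatibility conditions to shortcut the work. Recall from \ref{3.2} that $\Ps_\al$ comes from the bilinear map $(X,Y)\mapsto[\al(X),\al(Y)]-\al([X,Y])$, which by condition (1) is $Q$-equivariant and which, by condition (2), vanishes whenever either argument lies in $\q=\g_0\oplus\g_2$. Hence it factors through $\g/\q\cong\g_-\oplus\g_1$, and its values are pinned down by $Q$-equivariance once we know them on a few generators.

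\medskip

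First I would determine which homogeneous components of $\La^2(\tg_-)^*\otimes\tg$ can support $\Ps_\al$. Since $\al$ preserves the grading only ``up to $\g_0$-terms'' — inspection of the matrix shows $\al(\g_{-2})\subset\tg_{-2}$, $\al(\g_{-1})\subset\tg_{-1}^E\oplus\tg_{-2}$ (the $X_1,X_2$ columns land in the bottom-left $\tg_{-1}^E$-slot plus a piece in $\tg_{-2}$), $\al(\g_1)\subset\tg_{-1}^V\oplus\tg_0$ (the $U_1,U_2$ entries), and $\al(\g_0\oplus\g_2)\subset\tg_0\oplus\tg_1^E\oplus\tg_2=\tp$ — a homogeneity count shows that the only bracket mismatches of the right total degree to land in $\tg_0^{ss}\cong\frak{sl}(2n,\R)$ come from pairing a $\g_1$-argument with a $\g_{-2}$-argument. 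Combined with the fact that $\Ps_\al$ kills $\q$, this forces $\Ps_\al\in(\tg_{-1}^V)^*\wedge(\tg_{-2})^*\otimes\tg_0^{ss}$, which is the first claim. One then reads off the $\tg_0^{ss}$-valued expression: for $U\in\g_1$ (recorded by the row vectors $U_1,U_2$) and $X\in\g_{-2}$ (recorded by $z$, i.e.\ by $\pmat{X_1\\X_2}$ after applying the bracket with $W_0$ to move $\tg_1^E$-data into $\tg_{-2}$), the defect $[\al(U),\al(X)]-\al([U,X])$ is a block-diagonal $2n\times2n$ matrix whose blocks are built from $\I_{p,q}U_i^t$ times the rows of $X$, i.e.\ from the rank-one endomorphisms $\ups\mapsto\span{U_i,\ups}\,X_j$.

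\medskip

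Next I would evaluate the advertised trilinear map. Using $W_0\in\tg_1^E$ to identify $[Y,W_0]\in\tg_{-1}^V$ with the $\tg_{-2}$-vector $\pmat{Y_1\\Y_2}$, the element $\Ps_\al(X,[Y,W_0])\in\tg_0^{ss}=\frak{sl}(2n,\R)$ acts on $Z=\pmat{Z_1\\Z_2}\in\tg_{-2}=\R^{2n}$ by the standard representation; plugging in the block form obtained in the previous step gives precisely $\pmat{\span{X_1,Y_2}Z_1-\span{X_1,Y_1}Z_2\\ \span{X_2,Y_2}Z_1-\span{X_1,Y_2}Z_2}$ up to an overall non-zero constant (the constant tracks the $\tfrac12$'s in $\al$ and the normalization of $W_0$; I would not chase it). Finally, to see this is the complete symmetrization of that expression in $(X,Y,Z)$, I would observe two things: the genuine trilinear map $\Ps_\al(\cdot,[\cdot,W_0])$-then-bracket-with-$Z$ is, by the Jacobi identity in $\tg$ together with $\partial^*$-type symmetry of $\Ps_\al$ (equivalently, the fact that $\Ps_\al$ is the ``algebraic bracket defect,'' which is automatically totally symmetric after contracting all indices into $\tg_{-2}$), symmetric; and direct inspection of the displayed formula shows its symmetrization over $(X,Y,Z)$ has exactly the six-term form with coefficient pattern matching the cyclic sum of the displayed matrix. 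So the two sides agree up to scale.

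\medskip

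The main obstacle is the bookkeeping in the first paragraph's claim — proving that $\Ps_\al$ really does land only in $(\tg_{-1}^V)^*\wedge(\tg_{-2})^*\otimes\tg_0^{ss}$ and in no other component. Naively this requires checking that all the other mixed brackets $[\al(\text{--}),\al(\text{--})]$ either reproduce $\al([\text{--},\text{--}])$ exactly or are killed by the $\q$-factorization; the homogeneity/grading argument above reduces the number of cases drastically, but one still has to verify that the potentially surviving $\tg_0$-component of the $(\g_1,\g_{-2})$-defect is genuinely tracefree (i.e.\ lands in $\tg_0^{ss}$ and not in the center of $\tg_0$). This is where I would be most careful: it follows because the block-diagonal matrix built from $\I_{p,q}U_i^t\cdot X_j^t$ has trace $\span{U_1,X_1}+\span{U_2,X_2}$ which is cancelled by the corresponding center-of-$\tg_0$ term already present in $\al([U,X])$ — a short but essential computation. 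Everything else is the routine matrix algebra that the statement invites us to skip.
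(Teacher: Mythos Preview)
Your overall strategy---compute $\Ps_\al$ directly from the explicit $\al$ of \ref{3.4}---is exactly what the paper does. But the execution contains several concrete errors that break the argument.

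First, your grading placements for $\al$ are wrong. Reading off the target matrix in \ref{3.4} (blocks of sizes $1,1,n,n$ refining the $1,1,2n$ block structure of $\tg$), one finds $\al(\g_{-2})\subset\tg_{-1}^E$ (the scalar $z$ lands in position $(2,1)$), $\al(\g_{-1})\subset\tg_{-2}\oplus\tg_1^V$ (the columns $X_1,X_2$ go to the $(3\text{--}4,1)$ block, with an extra piece in row $2$, columns $3$--$4$), and $\al(\g_1)\subset\tg_{-1}^V\oplus\tg_2$ (the $U_i$ go to column $2$, rows $3$--$4$, with an extra piece in row $1$). None of your three inclusions is correct. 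Consequently the ``$(\g_1,\g_{-2})$-defect'' you focus on is the wrong bracket: the preimages under $\underline{\al}$ of $\tg_{-1}^V$ and $\tg_{-2}$ lie in $\g_1$ and $\g_{-1}$ respectively, so the relevant defect is $[\al(\g_1),\al(\g_{-1})]-\al([\g_1,\g_{-1}])$.

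Second, the homogeneity shortcut does not suffice. With the correct placements, the a~priori target of $\Ps_\al$ on $\tg_{-2}\wedge\tg_{-2}$ is $\tg_{-1}^E$ (since $[\tg_{-2},\tg_1^V]\subset\tg_{-1}^E$ and $\al([\g_{-1},\g_{-1}])\subset\tg_{-1}^E$), and on $\tg_{-1}^E\wedge\tg_{-1}^V$ it is $\tg_{-2}\oplus\tg_1^V$; neither is forced to vanish by grading alone. The paper simply computes each case and checks the cancellation. Your proposal replaces this by an appeal to homogeneity that does not close the gap.

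Third, the order of the last step is inverted. The direct evaluation of $[\Ps_\al(X,[Y,W_0]),Z]$ does \emph{not} give the displayed unsymmetrized map: it gives the cyclic sum directly (this is what the paper verifies by writing out the $2n\times 2n$ block of $\Ps_\al(X,[Y,W_0])$ and applying it to $Z$). Your plan---obtain the unsymmetrized expression and then argue symmetry from ``Jacobi plus $\partial^*$-type symmetry''---is both the wrong way round and unsupported: there is no clean abstract reason the trilinear map is totally symmetric, and the paper does not claim one.

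In short: right approach, but the bookkeeping you flag as the ``main obstacle'' is genuinely where the argument lives, and the specific shortcuts you propose do not survive contact with the actual matrix for $\al$.
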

\begin{proof}
  For $X,Y\in\tg_-$, let $\hat X,\hat Y$ be the unique
  elements in $\g_-\oplus\g_1$ so that $\al(\hat X)$ and $\al(\hat Y)$ is congruent to
  $X$ and $Y$ modulo $\tp$, respectively.
  By the definition, one computes directly 
  $\Ps_\al(X,Y)=[\al(\hat X),\al(\hat Y)]-\al([\hat X,\hat Y])$ which has always
  values in the lower right $2n\x2n$ block of $\tg$ with vanishing trace, i.e.\
  in the semi-simple part of $\tg_0$.
  Further, $\Ps_\al$ vanishes whenever both the entries $X,Y$ lie 
  in $\tg^V_{-1}$ or in $\tg_{-2}$ or any of them lies in $\tg^E_{-1}$.
  Hence the map $\Ps_\al$ is indeed of the form $\tg^V_{-1}\wedge\tg_{-2}\to\g_0^{ss}$.
  
  Considering $X,Y\in\tg_{-2}$, the element $[Y,W_0]$ lies in $\tg^V_{-1}$ and the 
  non-zero $2n\x 2n$ block of $\Ps_\al(X,[Y,W_0])$  looks explicitly like
  $$
  \pmat{\frac12(R_{12}+\tr R_{12}\.\I_n)-R_{21} &\frac12(R_{11}-\tr R_{11}\.\I_n) \\
  	\frac12(R_{22}-\tr R_{22}\.\I_n) & R_{12}-\frac12(R_{21}+\tr R_{21}\.\I_n) },
  $$
  where $R_{11}=(X_1Y_1^t+Y_1X_1^t)\I_{p,q}$,
  $R_{22}=(X_2Y_2^t+Y_2X_2^t)\I_{p,q}$, 
  $R_{12}=(X_1Y_2^t+Y_1X_2^t)\I_{p,q}$, and $R_{21}=R_{12}^t$.
  The value of $[\Ps_{\al}(X,[Y,W_0]),Z]$ is then obtained by applying the above
  matrix to the vector $Z=\pmat{Z_1\\Z_2}$ from $\tg_{-2}=\R^n\x\R^n$.
  The result turns out to be the cyclic sum of
  $$
  \frac12(\span{Y_1,X_2}+\span{Y_2,X_1})\pmat{Z_1\\-Z_2}
  +\pmat{-\span{X_1,Y_1}Z_2\\\span{X_2,Y_2}Z_1},
  $$
  which is  the complete symmetrization of 
  $$
  \span{X_1,Y_2}\pmat{Z_1\\-Z_2}+\pmat{-\span{X_1,Y_1}Z_2\\\span{X_2,Y_2}Z_1}
  $$
  up to a non-zero multiple.
\end{proof}

In \ref{3.2} we motivated the definition of the pair $(i,\al)$.
In order to justify the choice we made in \ref{3.4}, we have to
check that  starting with the homogeneous model, the
associated Cartan geometry determined by $(i,\al)$ is regular and normal,
i.e.\ it is the canonical Cartan geometry describing the path geometry of
chains on $G/P$.
This is provided by the following Theorem.
\begin{thm}			\label{th2}
  Let $\dim M\ge 7$ and $(\Cal G\to M,\om)$ be a regular normal parabolic geometry of
  type $(G,P)$ and let $(\tilde\G:=\Cal G\x_Q\tilde P\to\P_0TM,\tilde\om_\al)$
  be the parabolic geometry obtained using the
  extension functor associated to the pair $(i,\al)$ defined in \ref{3.4}. 
  Then this geometry is regular and normal if and only if $(\Cal G\to M,\om)$ is locally flat. 
  In that case the induced Cartan geometry is non-flat and torsion free.
\end{thm}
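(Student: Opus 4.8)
The plan is to analyze the curvature function $\tilde\ka$ of the induced geometry via formula \eqref{eq6}, $\tilde\ka\o j=\al\o\ka\o\underline\al^{-1}+\Ps_\al$, and to separate its two contributions by homogeneity and by the algebraic type identified in Lemma \ref{lem2}. First I would recall that by Proposition \ref{lem1} the extension functor is well defined, so the only issue is regularity and normality of $(\tilde\G\to\P_0TM,\tilde\om_\al)$. The key structural input is Lemma \ref{lem2}: the extra term $\Ps_\al$ lives in $(\tg^V_{-1})^*\wedge(\tg_{-2})^*\otimes\tg_0^{ss}$, and its ``symbol'' is the complete symmetrization of the explicitly given trilinear map built from the inner product $\span{\ ,\ }$ on $\R^n$; in particular, since that symmetrization is manifestly non-zero whenever $\span{\ ,\ }$ is non-degenerate and $n\ge 2$ (which is exactly $\dim M\ge 5$, and a fortiori for $\dim M\ge 7$), the map $\Ps_\al$ is non-zero. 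This immediately gives the last sentence's non-flatness claim: $\tilde\ka$ can never vanish identically, because even when $\ka\equiv 0$ (locally flat case) one has $\tilde\ka\o j=\Ps_\al\ne 0$.

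Next I would prove the ``if'' direction. Assume $(\G\to M,\om)$ is locally flat, so $\ka\equiv 0$ and hence $\tilde\ka\o j=\Ps_\al$. One checks directly that $\Ps_\al$ has values in $\tg_0^{ss}\subset\tp$, hence $\tilde\ka$ has values in $\La^2\tp_+\otimes\tp$: the induced geometry is torsion free, in particular regular. For normality I would verify $\partial^*\circ\Ps_\al=0$ by a direct computation in the Lie algebra homology complex $\partial^*:\La^2\tp_+\otimes\tg\to\tp_+\otimes\tg$; since $\Ps_\al$ is homogeneous of a single degree and lands in a small, explicitly described $\tg_0$-submodule, this is a finite linear-algebra check — the Kostant codifferential of a completely symmetric cubic term of this shape is zero. (Alternatively, one can invoke that $\Ps_\al$ is, up to lower order, the harmonic representative: the homogeneity-$0$ component of a normal curvature is automatically harmonic, and one identifies $\Ps_\al$ with a component already known to be $\partial^*$-closed.) Together with torsion freeness this yields that $(\tilde\G\to\P_0TM,\tilde\om_\al)$ is regular and normal, so it is the canonical parabolic geometry of the path geometry of chains, and it is non-flat and torsion free as claimed.

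For the ``only if'' direction I would argue contrapositively: suppose the induced geometry is regular and normal but $(\G\to M,\om)$ is not locally flat, i.e. $\ka\not\equiv 0$. By regularity plus the general theory, $\ka$ is governed by its harmonic part $\ka_H$; for Lie contact structures of signature $(p,q)$ with $\dim M\ge 7$ the harmonic curvature sits in a known $G_0$-module — this is where the dimension hypothesis enters, excluding the low-dimensional coincidences listed in Remark \ref{rem1} (signatures $(2,0),(1,1),(1,0)$ and the $\dim M=9$ case $\frak{so}(6,2)\cong\frak{so}^*(8)$, where the module structure and harmonic components differ). Then one transports $\ka$ through $\al$ and $\underline\al^{-1}$ and compares homogeneities: the torsion-part and negative-homogeneity components of $\al\o\ka\o\underline\al^{-1}$ cannot be cancelled by $\Ps_\al$ (which has a fixed homogeneity and lies in $\tp$), so regularity (equivalently, $\tilde\ka$ having positive homogeneity) or torsion-freeness of the candidate normal geometry forces the offending components of $\ka$ to vanish; iterating over the finitely many homogeneity slots of the harmonic curvature module forces $\ka_H=0$, hence $\ka=0$, a contradiction. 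The technical heart — and the step I expect to be the main obstacle — is precisely this bookkeeping: one must pin down, for $\dim M\ge 7$, exactly which homogeneous components $\al$ injects into $\tp_+\otimes\tp$ versus into the ``torsion'' complement $\La^2\tp_+\otimes(\tg/\tp)$, and show the obstruction is genuinely non-trivial on every harmonic component; the split-quaternionic description from Proposition \ref{prop2} together with the explicit block form of $\al$ in \ref{3.4} is the tool I would use to make this computation tractable.
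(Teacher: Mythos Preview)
Your ``if'' direction and the non-flatness claim are essentially the paper's argument: with $\ka\equiv 0$ one has $\tilde\ka\o j=\Ps_\al$, which takes values in $\tg_0\subset\tp$ (torsion free, hence regular), is non-zero by Lemma~\ref{lem2}, and one then checks $\partial^*\Ps_\al=0$ directly or by the conceptual argument of \cite{CZ}.

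The ``only if'' direction, however, has a genuine gap. Your proposed mechanism---iterating over homogeneity slots, tracking which components of $\al\o\ka\o\underline\al^{-1}$ land in the torsion part, and using the split-quaternionic description to make this tractable---is both more complicated than needed and misses the decisive fact. The actual argument is two clean steps. First, from regularity and normality of the induced geometry one deduces that $(\G\to M,\om)$ is torsion free; this is the part that genuinely uses the harmonic-curvature analysis on the $\tilde\om_\al$ side (cf.\ Theorem~3.8 in \cite{CZ}). Second---and this is the point you are missing---for Lie contact structures with $\dim M\ge 7$, Kostant's computation of $H_2(\p_+,\g)$ shows that \emph{every} harmonic curvature component has homogeneity one, i.e.\ is a torsion. (For $\dim M\ne 9$ there are two irreducible components; for $\dim M=9$ one of them splits further, but all pieces still sit in homogeneity one.) Hence ``torsion free and normal'' immediately gives $\ka_H=0$ and thus local flatness. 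No iteration, no bookkeeping across multiple homogeneity slots, and no explicit use of the block form of $\al$ or of Proposition~\ref{prop2} is required.

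You also misread the role of $\dim M=9$: it is not excluded from the theorem as a low-dimensional accident. The isomorphism $\frak{so}(6,2)\cong\frak{so}^*(8)$ only causes an extra splitting of one harmonic component; it does not change its homogeneity, so the argument goes through unchanged in that dimension.
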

\begin{proof}
  If $(\Cal G\to M,\om)$ is locally flat parabolic geometry of type $(G,P)$ then
  by \eqref{eq6} the curvature of the induced Cartan geometry is determined only by
  $\Ps_\al$.
  Since $\Ps_\al$ in non-trivial, the induced Cartan geometry in non-flat and
  since the values of $\Ps_\al$ are in $\tg_0\subset\tp$, it  is torsion free 
  and hence regular.
  In order to prove the normality, one has to show that $\del^*\Ps_\al=0$.
  This can be proved either by a direct computation or by a more conceptual 
  argument as   in the proof of Theorem 3.6 in \cite{CZ}.
%  According to the (proof of the) lemma above and the fact that $\g_{-2}$ is isomorphic to
%  $\R^{2n}$ as a $\tg_0^{ss}\cong\frak{sl}(2n,\R)$ representation, $\Ps_\al$ can
%  be seen as an element of the component which is  isomorphic to
%  $S^3\R^{2n*}\otimes\R^{2n}$.
%  By the same account as in the proof of \cite[Theorem 3.6]{CZ}, it turns out it 
%  suffices to show that $\Ps_\al$ lies in the kernel of the unique trace
%  $S^3\R^{2n*}\otimes\R^{2n}\to S^2\R^{2n*}$, which follows from an easy direct
%  computation.

  Conversely, let us suppose the induced Cartan geometry 
  $(\tilde\G\to\P_0TM,\tom_\al)$ is regular and normal.
  Then, analyzing the harmonic curvature of $\tom_\al$ as in the first part of the
  proof of Theorem 3.8 in \cite{CZ}, it easily follows the Cartan geometry $(\Cal G\to
  M,\om)$ is necessarily torsion free.
  If $\dim M\ge 7$, different from 9, there are two harmonic curvature
  components of $\om$; if $\dim M=9$, one of the two components above is
  further split into two parts. 
  In any case, all the components are of homogeneity one,
%  contained in $(\g_{-1})^*\wedge(\g_{-1})^*\otimes\g_{-1}$, 
  i.e.\ all are torsions.
  Hence for $\dim M\ge 7$, torsion free  and normal parabolic geometry of
  type $(G,P)$ is flat, i.e.\ locally isomorphic to the homogeneous model.
\end{proof}

%%%
\section{Applications}		\label{4}

The applications in this section are based on the normality of the induced
Cartan geometry. 
Hence according to Theorem \ref{th2} we consider the Lie contact
structure is locally flat.

\begin{thm}			\label{th3}
  Let $M$ be a contact manifold of dimension $\ge 7$ with locally flat 
  Lie contact structure.
  Then there is no linear connection on  $TM$ which has the chains among 
  its geodesics. 
\end{thm}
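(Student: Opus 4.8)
The plan is to argue by contradiction using the curvature data of the induced Cartan geometry computed in Lemma \ref{lem2}. Suppose some linear connection $\nabla$ on $TM$ has all chains among its geodesics. Since chains are only defined for directions transverse to the contact distribution $H$, what this really says is that for every point $x$ and every non-contact direction $\ell\subset T_xM$, the unique $\nabla$-geodesic through $x$ in direction $\ell$ coincides, as an unparametrized curve, with the chain determined by $\ell$. The standard way to encode this is to pass to the projectivized tangent bundle $\P_0TM$ and compare the path geometry of $\nabla$-geodesics with the path geometry of chains: both are regular normal parabolic geometries of type $(\tG,\tP)$, and agreeing as path geometries means the two Cartan geometries are isomorphic. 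But a path geometry comes from an affine connection exactly when a certain component of its harmonic curvature — the torsion — vanishes (the Fels/Grossman--type obstruction); so the assumption forces the harmonic torsion of the path geometry of chains to be zero.

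The key step is then to show this harmonic torsion does \emph{not} vanish for the path geometry of chains of a locally flat Lie contact structure. By Theorem \ref{th2}, since the structure is locally flat, the curvature $\tilde\ka$ of the induced geometry $(\tilde\G\to\P_0TM,\tom_\al)$ is given purely by $\Ps_\al$ via \eqref{eq6}, and the induced geometry is regular, normal, and torsion free. Being torsion free, all of $\tilde\ka$ sits in $\La^2\tilde\p_+\otimes\tilde\p$; the relevant harmonic curvature component for the ``geodesics of a connection'' question is the one living in the part of $H_2(\tilde\p_+,\tg)$ that controls torsion of the path geometry viewed via the projection $\P_0TM\to M$ (the ``curvature of the path geometry'' in the classical sense, which is the obstruction to the paths being geodesics of an affine connection). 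I would extract this component explicitly from the formula for $\Ps_\al$: Lemma \ref{lem2} identifies the essential part of $\Ps_\al$, after bracketing with $W_0\in\tg_1^E$ and then with $\tg_{-2}$, with the complete symmetrization of the explicit trilinear map
\begin{equation*}
  (X,Y,Z)\mapsto \pmat{\span{X_1,Y_2}Z_1-\span{X_1,Y_1}Z_2\\
  \span{X_2,Y_2}Z_1-\span{X_1,Y_2}Z_2},
\end{equation*}
up to a nonzero scalar. One checks this symmetric trilinear form is not identically zero — e.g.\ plug in $X=Y=Z$ with $X_1,X_2$ chosen so that $\span{X_1,X_2}\ne 0$ and $\span{X_1,X_1},\span{X_2,X_2}$ arranged to make the output nonzero, which is possible since the inner product has signature $(p,q)$ with $n=p+q\ge 3$ in the range $\dim M\ge 7$. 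Hence $\Ps_\al$ has nonzero harmonic part in exactly the component obstructing realizability by an affine connection.

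The main obstacle is the bookkeeping: one must pin down precisely \emph{which} irreducible component of $H_2(\tilde\p_+,\tg)$ is the obstruction to a path geometry arising from a linear connection, and verify that the nonzero piece of $\Ps_\al$ isolated in Lemma \ref{lem2} projects nontrivially to that component rather than being killed by $\del^*$ or by the Bianchi-type identities (it is not killed by $\del^*$ by the normality already established in Theorem \ref{th2}, so the real point is the projection to the correct harmonic slot). For this I would follow the Kobayashi--Nagano / classical path-geometry description: a path geometry on an $m$-manifold is the geodesic path geometry of some affine connection if and only if the torsion part of its harmonic curvature — the component valued in $\tg_{-2}$ — vanishes, and one relates this to $\Ps_\al$ through the isomorphism $\underline\al^{-1}:\tg/\tp\to\g/\q$ together with the brackets with $W_0\in\tg_1^E$ used in Lemma \ref{lem2}, so that the displayed symmetric trilinear form \emph{is} (a nonzero multiple of) the relevant harmonic torsion read back in $\tg_{-2}$-valued terms. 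Given that identification, non-vanishing of the symmetrized form immediately yields the theorem. I would also remark that the restriction $\dim M\ge 7$ is used both through Theorem \ref{th2} (to know local flatness is the right hypothesis) and through $n\ge 3$, which guarantees enough room in the signature-$(p,q)$ inner product to exhibit a nonzero value of the trilinear form.
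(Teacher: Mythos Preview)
Your overall strategy coincides with the paper's: show that the harmonic curvature component of the induced path geometry which obstructs coming from a projective (equivalently, affine) connection is precisely $\Psi_\al$, and that it is nonzero. But two points in your write-up are off and obscure the clean argument.

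First, you repeatedly call the relevant obstruction a ``torsion'' and say it is ``valued in $\tg_{-2}$''. This is wrong. Lemma \ref{lem2} shows $\Psi_\al$ takes values in $\tg_0^{ss}$, and Theorem \ref{th2} explicitly states the induced Cartan geometry is torsion free. The trilinear map displayed in Lemma \ref{lem2} lands in $\tg_{-2}$ only because of the final bracket with $Z$, which is just the standard way of viewing an element of $\tg_0^{ss}\cong\frak{sl}(2n,\R)$ as an endomorphism of $\tg_{-2}\cong\R^{2n}$; the harmonic component itself is curvature-type, not torsion. In the correspondence-space picture of \cite{C1}, it is this curvature component whose vanishing characterizes path geometries arising from a projective structure; the torsion component governs descent in the other direction.

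Second, your ``main obstacle'' --- identifying the correct harmonic slot and checking $\Psi_\al$ projects there nontrivially --- is handled by the paper in one line that you are missing: $\Psi_\al$ lies in $(\tg_{-1}^V)^*\wedge(\tg_{-2})^*\otimes\tg_0^{ss}$, hence is of homogeneity three, and for generalized path geometries there is a \emph{unique} harmonic curvature component in that homogeneity. Since $\Psi_\al$ is already the full curvature of a normal geometry (so $\del^*\Psi_\al=0$), it \emph{is} that harmonic component. No further bookkeeping is needed. Likewise, the explicit non-vanishing check you propose is redundant: Theorem \ref{th2} already records that the induced geometry is non-flat, i.e.\ $\Psi_\al\ne 0$; the restriction $\dim M\ge 7$ enters only through Theorem \ref{th2}, not through any signature constraint on the trilinear form.
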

\begin{proof}
  Since we deal with locally flat Lie contact structures, the induced path
  geometry of chains is regular and normal by Theorem \ref{th2} and its
  curvature is completely determined by the map $\Ps_\al$.
  From lemma \ref{lem2}  we know that $\Ps_\al$ is of homogeneity three, hence
  it must coincide with the unique harmonic curvature component
  which is there in this  homogeneity for generalized path geometries, 
  see e.g.\ the summary in section 3.7 in \cite{CZ}.
  However by the second part of Theorem 4.7 in \cite{C1}, the vanishing of
  this component is equivalent to the fact the path geometry comes from a
  projective structure on $M$.
  Since $\Ps_\al\ne 0$, the claim follows.
\end{proof}

In other words, even working with the locally flat Lie contact structures, the
family of chains forms a rather complicated system of curves. 
In terms of second order ODE's, the chain equation is
never locally equivalent either to the trivial equation or to the geodesic
equation.
In particular, considering the homogeneous model, 
the chain equation provides an example of non-trivial torsion free second
order ODE  with a reasonably large automorphism group.
More specifically, the automorphism group obviously contains $G=PO(p+2,q+2)$,
i.e.\ it has dimension at least $\frac{n^2+5n+6}2$, provided $n=p+q$ as before.
We will see in the next section the dimension %of the automorphism group
actually equals to the dimension of $G$.

\subsection{The reconstruction}
As we know from \ref{3.5}, the harmonic curvature $\tilde\ka_H$ of the induced Cartan
geometry is a section of the bundle associated to
$(\tg_{-1}^V)^*\otimes(\tg_{-2})^*\otimes\tg_0^{ss}$. 
We are going to interpret this quantity geometrically, which will allow us to
reconstruct the Lie contact structure from the path geometry of chains.

Let $(\G\to M,\om)$ be the parabolic geometry corresponding to a locally 
flat Lie contact structure on $M$.
Let $\tilde M=\P_0TM$ and let $(\tilde\G\to\tilde M,\tom)$ be the parabolic
geometry induced by the path geometry of chains.
Let us denote by $\tilde E$ and $\tilde V$ the subbundles in $T\tilde M$ 
corresponding to the subspaces $\tg_{-1}^E$  and $\tg_{-1}^V$ in $\tg_{-1}$,
respectively. 
Let us further denote $\tilde F:=T\tilde M/(\tilde E\oplus\tilde V)$; as an
associated bundle over $\tilde M$ this corresponds to
$\tg_{-2}\cong\tg_-/\tg_{-1}$. 
As before, we can replace the space 
$(\tg_{-1}^V)^*\otimes(\tg_{-2})^*\otimes\tg_0^{ss}$ by
$\tg_{-1}^E\otimes(\otimes^3(\tg_{-2})^*)\otimes\tg_{-2}$;
the corresponding associated bundle is  
$\tilde E\otimes(\otimes^3\tilde F^*)\otimes\tilde F$. 
Altogether, since $\tilde E\subset T\tilde M$ is a line bundle, we
can view the harmonic curvature $\tilde\ka_H$ as a section of the bundle
$\otimes^3\tilde F^*\otimes\tilde F\to\tilde M$ determined up to a non-zero multiple.
(More specifically, lemma \ref{lem2} shows it is actually completely
symmetric and trace free.)

In order to express $\tilde\ka_H$ in terms of the underlying Lie contact structure  
on $M$, let us fix $x\in M$ and $\ell\in\pi^{-1}(x)$, where $\pi:\tilde M\to M$
is the natural projection. 
(By definition, $\ell$ is a line in $T_xM$ which is transverse to the contact
distribution $H_x\subset T_xM$.)
For each $\xi\in T_xM$, let $\tilde\xi\in T_\ell\tilde M$ be any lift and consider 
its class  in $\tilde F_\ell=T_\ell\tilde M/(\tilde
E_\ell\oplus\tilde V_\ell)$. 
Since $\tilde V\subset T\tilde M$ is the vertical subbundle of the projection $\pi$, 
this class is independent of the choice of the lift.
From the explicit description of the tangent map of the projection $\pi$ it easily 
follows that its appropriate restriction yields a linear isomorphism 
$H_x\cong\tilde F_\ell$.
Altogether, for a fixed $x$ and $\ell$, the $\tilde\ka_H$ gives rise to an element 
of $\otimes^3H_x^*\otimes H_x$, denoted by $S_x$, which is determined up to a 
non-zero multiple.
Its explicit description and the geometrical meaning are as follows.
\begin{lem*}				\label{lem3}
  Let $(M,H=L^*\otimes R)$ be a locally flat Lie contact manifold and let
  $\Cal Q=\span{I,J,K}$ be the corresponding split-quaternionic structure on
  $H$ as in \ref{2.4}. 
  Let $S_x$ be the element of $\otimes^3H_x^*\otimes H_x$  constructed from 
  the harmonic curvature of the associated path geometry of chains as above.

  (1) Then, up to a non-zero multiple, $S$ is the cyclic  sum of the mapping
  $$
    (\xi,\eta,\ze)\mapsto\L(\xi,I\eta)I\ze+\L(\xi,J\eta)J\ze-\L(\xi,K\eta)K\ze,
  $$
  which is independent of the choice of basis of $\Cal Q$. 

  (2) A non-zero element $\xi$ in $H_x=L_x^*\otimes R_x$ is of rank one if and only if 
  $S(\xi,\xi,\xi)=0$ or there is $\eta\in H_x$ such that $S(\xi,\xi,\eta)$ is
  a non-zero multiple $\xi$.
\end{lem*}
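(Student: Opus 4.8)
The plan is to establish part~(1) by a direct comparison of two explicit expressions — the symmetrized trilinear map produced in Lemma~\ref{lem2} on the one hand, and the cyclic sum of $(\xi,\eta,\ze)\mapsto\L(\xi,I\eta)I\ze+\L(\xi,J\eta)J\ze-\L(\xi,K\eta)K\ze$ on the other — after transporting both to $\otimes^3H_x^*\otimes H_x$ via the isomorphisms fixed in \ref{2.4} and in the paragraph preceding the lemma. Concretely, I would first recall that under $H_x\cong L_x^*\otimes R_x\cong\R^{2*}\otimes\R^n$ the split-quaternionic generators $I,J,K$ act by right multiplication by the matrices in \eqref{eq5}, and that the Levi bracket has the form $\L(f_1\otimes u_1,f_2\otimes u_2)=|f_1,f_2|\span{u_1,u_2}$ from \ref{2.3}. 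Writing a general $\xi\in H_x$ in the block form $\xi=\pmat{\xi_1&\xi_2}$ with $\xi_i\in\R^n$ exactly as in Lemma~\ref{lem2}, the effect of $I$, $J$, $K$ on the two columns is the elementary sign-and-swap operation dictated by \eqref{eq5}. Substituting these into the candidate cyclic sum and expanding $\L$ columnwise produces precisely the vector
\begin{equation*}
  \span{\xi_1,\eta_2}\pmat{\ze_1\\-\ze_2}+\pmat{-\span{\xi_1,\eta_1}\ze_2\\\span{\xi_2,\eta_2}\ze_1},
\end{equation*}
which is the very map whose complete symmetrization Lemma~\ref{lem2} identifies (up to a non-zero multiple) with $(X,Y,Z)\mapsto[\Ps_\al(X,[Y,W_0]),Z]$, and hence with $S_x$. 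The independence of the choice of adapted basis of $\Cal Q$ follows because any two such bases differ by an element of the relevant structure group $SO(1,2)$ acting on $\span{I,J,K}$ preserving the norm of signature $(1,2)$; since the expression is built from the norm-weighted combination $(+,+,-)$ matching the signature, it is invariant — alternatively one observes the formula can be rewritten intrinsically as $\sum_{A}\tfrac{1}{|A|^2}\L(\xi,A\eta)A\ze$ over an orthonormal-type frame, manifestly frame-independent.

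For part~(2), the strategy is to diagonalize the situation using Lemma~\ref{lem0}(2)--(3): a non-zero $\xi\in H_x$ has rank one precisely when some product structure $A\in\Cal Q_x$ fixes $\xi$, equivalently when $\ker\xi$ is a line $\ell\subset\R^2$, equivalently when $\xi\in W_\ell$ for some $\ell$. So I would compute $S(\xi,\xi,\cdot)$ in the rank-one case by choosing coordinates on $\R^2$ so that $\ell=\ker\xi$ is a coordinate axis, whence $\xi=f\otimes u$ for a single $f\in\R^{2*}$ and $u\in\R^n$; then in the trilinear formula every term $\L(\xi,A\eta)A\ze$ carries a factor $|f,f'|\span{u,\cdot}$ and $A\ze$, and the cyclic symmetrization of a map in which two of the three slots are the \emph{same} rank-one element collapses: one checks directly that the three cyclic terms either vanish (the ``$\L(\xi,\xi,-)$''-type contributions, since $\L$ is alternating and $I,J,K$ interchange the eigenspaces in a controlled way) or combine into a multiple of $\xi$ itself. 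This gives the ``if'' direction: rank one forces either $S(\xi,\xi,\xi)=0$ or $S(\xi,\xi,\eta)\in\R\xi$ for suitable $\eta$. For the converse, I would argue contrapositively: if $\xi$ has rank two then $\xi$ is invertible as a map $\R^2\to R_x$-image (equivalently $\xi_1,\xi_2$ span a $2$-plane), and a short computation with the explicit vector formula above shows the image of $S(\xi,\xi,-)$ is then $2$-dimensional and contains vectors not proportional to $\xi$, while $S(\xi,\xi,\xi)\ne 0$; here the nondegeneracy of $\span{\ ,\ }$ on the $2$-plane spanned by $\xi_1,\xi_2$ is what is used, and one treats the (finitely many) degenerate sub-cases — $\xi_1,\xi_2$ spanning a null line or a null plane — separately, noting the conclusion $S(\xi,\xi,\xi)\ne0$ or $S(\xi,\xi,\cdot)\not\subset\R\xi$ persists.

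The main obstacle I anticipate is purely bookkeeping in part~(2): the complete symmetrization in Lemma~\ref{lem2} means $S(\xi,\xi,\eta)$ is a sum of several terms with permuted arguments, and keeping track of which combinations of $\L(\xi_i,\eta_j)$ and which sign pattern from $I,J,K$ survive requires care, especially since the answer must come out \emph{exactly} proportional to $\xi$ in the rank-one case (no leftover component). I would organize this by writing $\xi=f\otimes u$ once and for all, extending $f$ to a basis $(f,g)$ of $\R^{2*}$, decomposing $\eta=f\otimes a+g\otimes b$, and then each trilinear term becomes a scalar times $f\otimes(\text{something})$ or $g\otimes(\text{something})$; the $g$-components must cancel after symmetrization, which is the computation to check. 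The rank-two direction is easier since there one only needs to exhibit a \emph{single} $\eta$ with $S(\xi,\xi,\eta)\notin\R\xi$, or else show $S(\xi,\xi,\xi)\ne0$, and the explicit vector formula makes this transparent once $\span{\xi_1,\xi_1},\span{\xi_2,\xi_2},\span{\xi_1,\xi_2}$ are not all forced into the degenerate configuration.
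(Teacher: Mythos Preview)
Your approach to part~(1) is essentially the paper's: both pass to the column description $\xi=(X_1,X_2)$, compute $I\xi,J\xi,K\xi$ via right multiplication by the matrices in \eqref{eq5}, expand $\L(\xi,A\eta)$ using \eqref{eq1}, and match against the trilinear map from Lemma~\ref{lem2}. The forward direction of part~(2) is also handled in the same spirit.

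The converse in part~(2), however, has a genuine gap. Your contrapositive is mis-stated: the negation of ``$S(\xi,\xi,\xi)=0$ \emph{or} there exists $\eta$ with $S(\xi,\xi,\eta)\in\R^*\xi$'' is the \emph{conjunction} ``$S(\xi,\xi,\xi)\ne0$ \emph{and} for every $\eta$ the value $S(\xi,\xi,\eta)$ is not a non-zero multiple of $\xi$.'' What you aim to show in the degenerate sub-cases --- ``$S(\xi,\xi,\xi)\ne0$ or $S(\xi,\xi,\cdot)\not\subset\R\xi$'' --- is neither the right connective nor the right quantifier: exhibiting some $\eta$ with $S(\xi,\xi,\eta)\notin\R\xi$ does not preclude another $\eta'$ with $S(\xi,\xi,\eta')\in\R^*\xi$. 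Moreover, your headline claim that rank two forces $S(\xi,\xi,\xi)\ne0$ is false in indefinite signature: if $X_1,X_2$ are linearly independent null vectors with $\langle X_1,X_2\rangle=0$ (possible already for signature $(2,2)$), then all three of $\L(\xi,I\xi),\L(\xi,J\xi),\L(\xi,K\xi)$ vanish and $S(\xi,\xi,\xi)=0$ although $\xi$ has rank two.

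The paper avoids the contrapositive entirely. Its converse argument is structural: it observes that the hypothesis $S(\xi,\xi,\xi)=0$ (respectively $S(\xi,\xi,\eta)=c\xi$, in the case where $\L(\xi,I\xi)=\L(\xi,J\xi)=\L(\xi,K\xi)=0$) rewrites as a nontrivial linear relation $aI\xi+bJ\xi+cK\xi=\lambda\xi$, whence $A\xi=\xi$ for a suitable $A\in\Cal Q_x$, and then Lemma~\ref{lem0}(2) immediately gives rank one. This use of the split-quaternionic characterisation is the key idea your outline is missing; once you insert it, the degenerate sub-cases dissolve and no case-by-case analysis of the image of $S(\xi,\xi,\cdot)$ is needed.
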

Although the Levi bracket $\L$ has values in $TM/H$, we view it here as a
real valued bilinear form, because of the non-zero multiple freedom.
\begin{proof}
  (1) Since $\tilde\ka_H$ is given by $\Psi_\al$, we just need to compare the
  map above with the one described in lemma \ref{lem2}.
  On the level of Lie algebra $\g$, the Levi bracket $\L$ corresponds to $[\
  ,\ ]:\g_{-1}\x\g_{-1}\to\g_{-2}$, which is explicitly described by
  \eqref{eq1} in \ref{2.1}.
  The operators $I,J$, and $K$ correspond to the multiplication by
  matrices \eqref{eq5} on $\g_{-1}=\R^{2*}\otimes\R^n$ from the right.
  Representing $\xi\in H_x$ by a matrix  $(X_1,X_2)$ with columns 
  $X_1,X_2\in\R^n$,  the images $I\xi$, $J\xi$ and $K\xi$ corresponds to 
  $(X_1,-X_2)$, $(X_2,X_1)$, and $(-X_2,X_1)$, respectively.
  Representing also $\eta$ and $\ze$ by $(Y_1,Y_2)$ and $(Z_1,Z_2)$, 
  the expressions $\L(\xi,I\eta)$, $\L(\xi,J\eta)$, and $\L(\xi,K\eta)$, 
  correspond then to %the symmetric bilinear maps 
  $-\span{X_1,Y_2}-\span{X_2,Y_1}$, $\span{X_1,Y_1}-\span{X_2,Y_2}$, and 
  $\span{X_1,Y_1}+\span{X_2,Y_2}$, respectively.
  Hence the direct substitution yields that the two mappings correspond each 
  other up to a non-zero multiple.
  The independence of the choice of basis of $\Cal Q$ follows by a
  straightforward checking.

  (2) In the above terms, $\xi$ is of rank one if and only if the
  corresponding vectors $X_1$ and $X_2$ are linearly dependent.
  Using this it is then easy to check that $S(\xi,\xi,\xi)=0$ and, moreover,
  that $S(\xi,\xi,\eta)$ equals to a multiple of $\xi$, for any $\eta$.

  For the converse statement, let us distinguish the two cases:
  Firstly, suppose $\L(\xi,I\xi)$, $\L(\xi,J\xi)$, and $\L(\xi,K\xi)$ does 
  not vanish simultaneously.
  Since the assumption $S(\xi,\xi,\xi)=0$ is equivalent to
  $\L(\xi,I\xi)I\xi+\L(\xi,J\xi)J\xi-\L(\xi,K\xi)K\xi=0$, it follows that 
  if two of the summands vanish then $\xi$ will be 0.
  Hence if $\xi\ne 0$ and $S(\xi,\xi,\xi)=0$ then at most one of the three
  summands vanishes and this yields that $\xi=A\xi$ for a specific element 
  $A\in\span{I,J,K}$.
  Hence $\xi$ has got rank one by lemma \ref{lem0}.

  Secondly, suppose that $\L(\xi,I\xi)=\L(\xi,J\xi)=\L(\xi,K\xi)=0$.
  Then $S(\xi,\xi,\xi)$ vanishes trivially, however, it also turns out that 
  $S(\xi,\xi,\eta)$ is a non-zero multiple of 
  $\L(\xi,I\eta)I\xi+\L(\xi,J\eta)J\xi-\L(\xi,K\eta)K\xi$, for any $\eta$.
  Hence the assumption there is $\eta$ so that $S(\xi,\xi,\eta)=\xi$ yields that 
  there is $A\in\span{I,J,K}$ so that $A\xi=\xi$.
  The rest follows again by lemma \ref{lem0}.
\end{proof}

According to the development in \ref{2.3}, to recover the Lie contact 
structure on $M$ it is enough to determine the rank-one elements in $H_x$.
The above lemma provides this in terms of $S_x$, hence we conclude by the
following interesting result:
\begin{thm*}				\label{th4}
  Let $M$ be a locally flat Lie contact manifold.
  Then the Lie contact structure can be reconstructed from the harmonic curvature 
  of the regular normal
  parabolic geometry associated to the path geometry of chains.
  Consequently, a contact diffeomorphism on $M$ which maps chains to chains
  is an automorphism of the Lie contact structure.
\end{thm*}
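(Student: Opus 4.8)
The plan is to combine Lemma \ref{lem3}(2) with the description of the Segre cone in \ref{2.3}, and then to deduce the statement on diffeomorphisms from the naturality of that construction. Throughout I take $\dim M\ge 7$, as everywhere in this section, so that by Theorem \ref{th2} the parabolic geometry $(\tilde\G\to\tilde M,\tom)$ associated to the path geometry of chains is the canonical regular normal one of type $(\tG,\tP)$ over $\tilde M=\P_0TM$; its harmonic curvature $\tilde\ka_H$ is then a well-defined section of the associated bundle, natural under morphisms, and a complete invariant of the path geometry. From the parabolic structure one recovers the fibration $\pi:\tilde M\to M$ (its vertical subbundle being $\tilde V$, as noted before Lemma \ref{lem3}), hence the contact distribution $H\subset TM$, and in any case one has at hand the canonical isomorphisms $H_x\cong\tilde F_\ell$ induced by the tangent map of $\pi$.

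Transporting $\tilde\ka_H$ through these isomorphisms I obtain, at each $x\in M$, the tensor $S_x\in\otimes^3H_x^*\otimes H_x$, well defined up to a non-zero scalar and, by the explicit formula in Lemma \ref{lem3}(1), independent of the chosen line $\ell\in\pi^{-1}(x)$. The two conditions of Lemma \ref{lem3}(2) --- $S_x(\xi,\xi,\xi)=0$, and the existence of $\eta$ with $S_x(\xi,\xi,\eta)$ a non-zero multiple of $\xi$ --- are invariant under rescaling $S_x$, so they cut out canonically the set of rank-one elements of $H_x\cong L_x^*\otimes R_x$, i.e.\ the Segre cone $\Cal C_x$. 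At this point I invoke \ref{2.3} (together with Lemma \ref{lem0} and Proposition \ref{prop1}): the field of Segre cones in $H\subset TM$ is equivalent to the Lie contact structure --- it recovers the split-quaternionic structure $\Cal Q$ on $H$, while the Levi bracket is already intrinsic to $H\subset TM$ and the remaining identification of line bundles is the one described in \ref{2.3}. This gives the reconstruction.

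For the consequence about diffeomorphisms, let $\phi$ be a contact diffeomorphism of $M$ carrying chains to chains. Being contact, $\phi$ maps $\P_0TM$ to itself, and as it preserves the family of chains it is a morphism of the path geometry of chains; by the functorial equivalence of path geometries with their canonical regular normal parabolic geometries of type $(\tG,\tP)$ (see \cite{CS,C1}), $\phi$ lifts to an automorphism of $(\tilde\G\to\tilde M,\tom)$. This automorphism preserves the harmonic curvature $\tilde\ka_H$, hence every object built from it above: the fibration $\pi$, the isomorphisms $H_x\cong\tilde F_\ell$, the tensors $S_x$, and therefore the field of Segre cones $\Cal C$. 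Consequently $\phi$ preserves $(H,\Cal C)$, i.e.\ it is an automorphism of the Lie contact structure.

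The step I expect to require the most care is verifying that each ingredient of the reconstruction is genuinely canonical --- that $\tilde V$, the fibration $\pi$, the identification $H_x\cong\tilde F_\ell$ and the tensor $S_x$ are manufactured only from data intrinsic to the path geometry of chains, with no auxiliary choices creeping in --- together with the functoriality of the equivalence between path geometries and their canonical parabolic geometries, applied to a chain-preserving diffeomorphism. Both are soft consequences of the general theory of parabolic geometries and of the correspondence-space picture behind chains (as in \cite{CZ}); the only genuine computation, the explicit form of $S_x$ and the ensuing rank-one criterion, has already been carried out in Lemma \ref{lem3}.
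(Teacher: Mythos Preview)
Your proposal is correct and follows essentially the same route as the paper: use Lemma~\ref{lem3}(2) to read off the Segre cone from $S_x$, invoke \ref{2.3}/Proposition~\ref{prop1} to recover the Lie contact structure from the cone, and then deduce the statement on chain-preserving contact diffeomorphisms by naturality of the harmonic curvature. The paper's own proof is terser on the reconstruction step (it is absorbed into the paragraph preceding the theorem) and does not spell out the independence of $S_x$ from the choice of $\ell$ or the canonicity of the intermediate identifications; your more careful bookkeeping on these points is a welcome addition but does not change the underlying argument.
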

\begin{proof}
  A contact diffeomorphism $f$ on $M$ lifts to a diffeomorphism $\tilde f$ on 
  $\tilde M=\Cal P_0TM$.
  The assumption that $f$ preserves chains yields that $\tilde f$ is an
  automorphism of the associated path geometry of chains.
  In particular, $\tilde f$ is compatible with the harmonic curvature
  $\tilde\ka_H$, which by assumption corresponds to the mapping $S$ above.
  Hence $f$ is compatible with $S$ and the rest follows.
\end{proof}

%%%
\subsection{Final remarks}
As we noted in remark \ref{rem1}, the Lie contact structures in dimension 5
and 3 are basically equivalent to other parabolic contact structures. 
The procedure of previous sections is in effect  independent of
the dimension, however, the difference in these cases is that 
there are harmonic curvature components with higher homogeneity. 
Hence the torsion freeness condition does not imply the local flatness as in
\ref{th2} and the results are more general.
Consult the corresponding sections in \cite{CZ} and \cite{CZ2} for details.

%%%%%%%%%%%%%%%%%%%%%%%%%%%%%%%%%%%%%%%%%%%%%%%%%%%%%%%%%%%%%%%%%%%%%%%%

\end{document}